\newcommand{\B}{\mathcal{B}}
\newcommand{\D}{ \mathbb{D}}
\newcommand{\dD}{ \partial\mathbb{D}}
\newcommand{\ran}{\operatorname{ran}}
\newcommand{\Span}{\operatorname{span}}
\newcommand{\norm}[1]{\left\| #1 \right\|}
\newcommand{\inner}[1]{\langle #1 \rangle}
\newcommand{\N}{\mathbb{N}}
\newcommand{\h}{\mathcal{H}}
\newcommand{\K}{\mathcal{K}}
\renewcommand{\phi}{\varphi}
\newcommand{\WOT}{\operatorname{(WOT)}}
\newcommand{\SOT}{\operatorname{(SOT)}}
\newcommand{\SST}{\operatorname{(SST)}}
\newtheorem{Conjecture}{Conjecture}
\theoremstyle{plain}
\newtheorem{Theorem}{Theorem}
\newtheorem{Corollary}[Theorem]{Corollary}
\newtheorem*{Claim}{Claim}
\newtheorem{Lemma}[Theorem]{Lemma}
\theoremstyle{definition}
\newtheorem*{Definition}{Definition}
\newtheorem{Example}[Theorem]{Example}
\begin{document}
\bibliographystyle{plain}

    \title[On the closure of the complex symmetric operators]
    	{On the closure of the complex symmetric operators:  compact operators\\and weighted shifts}

    \author{Stephan Ramon Garcia}
    \address{   Department of Mathematics\\
            Pomona College\\
            Claremont, California\\
            91711 \\ USA}
    \email{Stephan.Garcia@pomona.edu}
    \urladdr{http://pages.pomona.edu/\textasciitilde sg064747}

	\author{Daniel E.~Poore}

    \keywords{Complex symmetric operator, unitary orbit, Hilbert space, compact operator, strong-* topology, strong operator
    	topology, weak operator topology, Kakutani shift, self-similarity, palindrome, shift operator, weight sequence, unilateral shift,
	irreducible.}
    \subjclass[2010]{47A05, 47B35, 47B99}

    \thanks{Partially supported by National Science Foundation Grant DMS-1001614.}

    \begin{abstract}
    	We study the closure $\overline{CSO}$ of the set $CSO$ of all complex symmetric operators on a
	separable, infinite-dimensional, complex Hilbert space.  Among other things, we 
	prove that every compact operator in $\overline{CSO}$ is complex symmetric. 
	Using a construction of Kakutani as motivation, we also describe many properties
	of weighted shifts in $\overline{CSO} \backslash CSO$.  In particular, we show that weighted shifts which
	demonstrate a type of approximate self-similarity belong to $\overline{CSO}\backslash CSO$.	
	As a byproduct of our treatment of weighted shifts, we explain several ways in which our result on
	compact operators is optimal.
    \end{abstract}

\maketitle

\section{Introduction}
	Throughout the following, we let $\h$ denote a separable, infinite-dimensional complex Hilbert space.
	Recall that $T \in \B(\h)$ is a \emph{complex symmetric operator} 
	if there exists a \emph{conjugation} $C$ (i.e., a conjugate-linear, isometric involution on $\h$) such that $T = CT^*C$.
	We remark that the term \emph{complex symmetric} stems from the fact that $T$ is a complex symmetric operator 
	if and only if $T$ is unitarily equivalent to a symmetric (i.e., self-transpose) matrix with complex entries,
	regarded as an operator acting on a $\ell^2$-space of the appropriate dimension 
	\cite[Sect. 2.4]{CCO}.	
	The general study of complex symmetric operators was undertaken by the first author, M.~Putinar, and W.R.~Wogen in
	\cite{CSOA, CSO2, ESCSO, MUCFO, CSPI, SNCSO}, although much of the
	theory has classical roots in the matrix-oriented work of N.~Jacobson \cite{Jacobson}, T.~Takagi \cite{Takagi}, 
	C.L.~Siegel \cite{Siegel}, and I.~Schur \cite{Schur}.  A number of other authors
	have recently made significant contributions to the study of complex symmetric operators
	\cite{CFT, Gilbreath, ZLJ, JKLL, JKLL2,WXH,Tener,Zag,ZLZ}, which has proven
	particularly relevant to the study of truncated Toeplitz operators \cite{CRW, Chalendar, NLEPHS, 
	SedlockThesis, Sedlock, TTOSIUES },
	a rapidly growing branch of function-theoretic operator theory stemming from the seminal work of D.~Sarason \cite{Sarason}.
	
	In the following, we let $CSO$ denote the set of all complex symmetric operators on $\h$.  
	We remark that the set $CSO$ is neither closed under addition nor under multiplication, 
	although it is closed under the adjoint operation and the Aluthge transform \cite[Thm.~1]{ATCSO},
	a remarkable nonlinear mapping on $\B(\h)$ which has been much studied in recent years
	\cite{Ando, Antezana, Benhida, Botelho, Cassier, Dykema, Exner, Huang, Wang}.  	
	Lately there has been some interest in the study of $CSO$ itself as a subset of $\B(\h)$
	\cite{ONCPCSO, CSPI,  ZLJ}.  Along these lines we begin by examining the closure of $CSO$
	in several of the most common topologies on $\B(\h)$ (Section \ref{SectionClosures}).
	In particular, we prove that the closure of $CSO$ in the strong-* topology is all of $\B(\h)$.  Among other things,
	this immediately implies that the strong-operator and weak-operator closures of $CSO$ are both $\B(\h)$.
	This contrasts sharply with the situation for the norm topology, 
	which we discuss in significant detail below.
	
	Let us denote by $\overline{CSO}$ the closure of $CSO$ with respect to the operator norm on $\B(\h)$.
	We first remark that $\overline{CSO}$ is a proper subset of $\B(\h)$.  Although this will be clear from what follows,
	we should mention that a simple example of an operator in $\B(\h) \backslash \overline{CSO}$ can be constructed by
	taking the direct sum of the matrix \cite[Ex.~1]{SNCSO} with $0$ and then applying \cite[Lem.~1]{CSPI}.  
	
	The so-called \emph{norm closure problem} for complex symmetric operators asked whether or not
	$\overline{CSO} = CSO$ \cite[p.~1260]{CSPI}.  Although it appeared in print only in 2009, this question had been
	circulating around the community for some years prior.		
	Recently, S.~Zhu, C.G.~Li, and Y.Q.~Ji demonstrated that a particular weighted shift operator
	belongs to $\overline{CSO} \backslash CSO$, thereby settling the norm-closure problem in the negative
	\cite{ZLJ}.  Shortly thereafter, the authors of this note constructed
	a completely different counterexample using a certain infinite direct sum of multiples of the unilateral shift and its adjoint \cite{ONCPCSO}.
	These examples indicate that the structure of $\overline{CSO}$ is much richer than previously expected.
	In particular, a complete description of the set $\overline{CSO}$ is now much desired.

\subsection*{Compact operators}
	Our first main result (Theorem \ref{TheoremCompact})
	asserts that every compact operator in $\overline{CSO}$ belongs to $CSO$.  In some sense,
	this complements the results of \cite{ONCPCSO} and \cite{ZLJ}, since none of the examples of operators in 
	$\overline{CSO} \backslash CSO$ described there are compact.  	
	Let us also remark that Theorem \ref{TheoremCompact}  furnishes a simple proof that $\overline{CSO} \neq \B(\h)$.
	Indeed, if $T$ is an irreducible weighted unilateral shift whose weights tend to zero, then $T$ is compact
	by \cite[Cor.~4.27.5]{ConwayCOT}.  However, $\dim \ker T = 0 \neq 1 = \dim \ker T^*$ for such an operator whence $T$ is
	not complex symmetric by \cite[Prop.~1]{CSOA}. 	
	
	It turns out that our result about compact operators is sharp in the following sense.
	Our proof relies heavily upon the fact that the spectrum $\sigma(|T|)$ of the \emph{modulus} $|T| = \sqrt{T^*T}$ of a compact
	operator consist of $0$ along with a decreasing sequence of positive eigenvalues of finite multiplicity.
	As we will see, it is possible to construct operators in $\overline{CSO} \backslash CSO$ such that $\sigma(|T|) \backslash \{0,1, \frac{1}{2}, \frac{1}{4},\ldots\}$ 
	consists only of eigenvalues, each of multiplicity one (Example \ref{ExampleDistinct}).
	On the other hand, the so-called \emph{Kakutani shift} (discussed at length below) belongs to $\overline{CSO} \backslash CSO$
	and satisfies $\sigma(|T|) = \{0\} \cup \{ \frac{1}{2^n} : n=0,1,2,\ldots\}$, each nonzero eigenvalue being of
	infinite multiplicity.  In light of these examples, it is difficult to envision a stronger version of Theorem \ref{TheoremCompact}.

\subsection*{Weighted shifts}
	As the preceding comments suggest, the study of $\overline{CSO}$ leads naturally
	to the consideration of weighted shifts.  Consequently, a substantial portion of this article is dedicated to this topic.
	We say that $T \in \B(\h)$ is a \emph{unilateral weighted shift}
	(or simply a \emph{weighted shift}) if there is an orthonormal basis $\{e_n\}_{n=1}^{\infty}$ of $\h$ and a sequence of scalars 
	$\{\alpha_n\}_{n=1}^{\infty}$ (the \emph{weight sequence})
	such that $Te_n = \alpha_n e_{n+1}$ for $n\geq 1$.  
	Since the weighted shift having weight sequence $\{ \alpha_n\}_{n=1}^{\infty}$ is unitarily equivalent to
	the unilateral shift with weight sequence $\{ | \alpha_n | \}_{n=1}^{\infty}$ (see \cite[Prop.~4.27.2]{ConwayCOT}
	or \cite[Prob.~89]{Halmos}), we henceforth assume that $\alpha_n \geq 0$ for all $n \geq 1$. We maintain
	this convention and the preceding notation in what follows.
	
	In light of the fact that
	\begin{equation*}
		T^*e_n =
		\begin{cases}
			0 & \text{if $n=1$},\\
			\alpha_{n-1} e_{n-1} & \text{if $n \geq 2$},
		\end{cases}
	\end{equation*}
	we see that if $\alpha_n = 0$, then $\Span\{ e_1,e_2,\ldots,e_n\}$ and $\overline{\Span\{e_{n+1},e_{n+2},\ldots\}}$
	are reducing subspaces of $T$.  Conversely, $T$ is irreducible
	if and only if $\alpha_n > 0$ for $n \geq 1$ \cite[p.~137-8]{ConwayCOT}.  For reasons which
	will become clear shortly, we focus our attention primarily on \emph{irreducible} weighted shifts.

	If the weight sequence $\alpha_n$ has exactly $N$ zeros (where $0\leq N < \infty$), then
	$\dim \ker T = N \neq N+1 = \dim \ker T^*$.  By \cite[Prop.~1]{CSOA}, this implies that $T \notin CSO$.
	We have therefore established the following lemma.

	\begin{Lemma}\label{LemmaIrreducible}
		If $T$ is an irreducible weighted shift, then $T \notin CSO$.
	\end{Lemma}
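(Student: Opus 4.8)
The plan is to read the two kernel dimensions directly off the action of $T$ and $T^*$ on the orthonormal basis $\{e_n\}_{n=1}^{\infty}$, and then invoke the standard obstruction to complex symmetry from \cite[Prop.~1]{CSOA}: if $T = CT^*C$ for some conjugation $C$, then $C$ maps $\ker T^*$ bijectively onto $\ker T$, so in particular $\dim \ker T = \dim \ker T^*$.

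First I would recall, as observed in the discussion preceding the statement, that an irreducible weighted shift has strictly positive weights, i.e.\ $\alpha_n > 0$ for every $n \geq 1$. Using $Te_n = \alpha_n e_{n+1}$, I would verify that $\ker T = \{0\}$: if $x = \sum_{n\geq 1} c_n e_n$ satisfies $Tx = 0$, then $\sum_{n\geq 1} c_n \alpha_n e_{n+1} = 0$, and orthonormality together with $\alpha_n > 0$ forces $c_n = 0$ for all $n$. Next, using the displayed formula for $T^*e_n$, the same kind of coefficient comparison shows that $\ker T^* = \Span\{e_1\}$, so $\dim \ker T^* = 1$.

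Finally I would combine these two facts: $\dim \ker T = 0 \neq 1 = \dim \ker T^*$, so by \cite[Prop.~1]{CSOA} the operator $T$ is not complex symmetric, that is, $T \notin CSO$.

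There is essentially no hard step here; the only mild subtlety is that the positive weights need not be bounded below, so the kernel computation should be carried out coefficient-by-coefficient rather than through a norm estimate on $T$. Alternatively, one can skip the separate kernel computations altogether by specializing to $N = 0$ in the more general observation already recorded above (a weight sequence with exactly $N < \infty$ zeros yields $\dim \ker T = N$ and $\dim \ker T^* = N+1$, hence $T \notin CSO$ by \cite[Prop.~1]{CSOA}).
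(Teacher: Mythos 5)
Your proposal is correct and follows exactly the paper's own argument: the paper establishes the lemma by noting that a weight sequence with exactly $N$ zeros gives $\dim \ker T = N \neq N+1 = \dim \ker T^*$ and invoking \cite[Prop.~1]{CSOA}, which is your argument specialized to $N=0$. The explicit coefficient-by-coefficient kernel computation you supply is a harmless elaboration of what the paper leaves implicit.
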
	
	
	It follows that if $T$ is a complex symmetric weighted shift, then the weight $\alpha_n = 0$ occurs infinitely often.  
	In this case, $T$ is unitarily equivalent to an operator of the form $\oplus_{i=1}^{\infty} T_i$ where 
	\begin{equation}\label{eq-Palindrome}
		T_i = \small
		\begin{pmatrix}
			0 &  & & & & \\
			\alpha_1^{(i)} & 0 &  & & &\\
			& \alpha_2^{(i)} & 0 & &\\
			& &  \ddots & \ddots & & \\
			& & & \alpha_{n_i-1}^{(i)} & 0  & 
		\end{pmatrix}
	\end{equation}
	is a $n_i \times n_i$ matrix with $\alpha_j^{(i)} > 0$ for $1 \leq j \leq n_i -1$.  We can be even more precise, for
	the recent work \cite{ZL} of S.~Zhu and C.G.~Li asserts that these constants must be \emph{palindromic}, in the sense that 
	\begin{equation*}
		\alpha_j^{(i)} = \alpha_{n_i-j}^{(i)}
	\end{equation*}
	whenever $1 \leq j \leq n_i - 1$.  Conversely, any such operator $T$ is 
	complex symmetric since $T = CT^*C$ where $C = \oplus_{i=1}^{\infty} C_i$ and
	$C_i(z_1,z_2,\ldots,z_{n_i}) = ( \overline{z_{n_i}}, \overline{ z_{n_i-1} }, \ldots, \overline{ z_1 } )$.
	
	The preceding discussion suggests a method for constructing \emph{irreducible} weighted
	shifts which belong to $\overline{CSO}$.
	This was first observed by S.~Zhu, C.G.~Li, and Y.Q.~Ji in \cite{ZLJ}, who noted that the so-called
	\emph{Kakutani shift} \cite[p.~282]{Rickart} (see also \cite[Pr.~104]{Halmos}) 
	belongs to $\overline{CSO}$.  Since this is an instructive example which we shall
	frequently refer to in what follows, we recall some of the relevant details here.
	
	Let $T$ denote the weighted shift corresponding to the weight sequence $\{\alpha_n\}_{n=1}^{\infty}$ whose initial terms are given by
	\begin{equation}\label{eq-Kakutani}\small
		\underbrace{
		\underbrace{
		\underbrace{ 1,\tfrac{1}{2},1}_{2^2-1},\tfrac{1}{4},
		\underbrace{1,\tfrac{1}{2},1}_{2^2-1}}_{\text{$2^3-1$ terms}},\tfrac{1}{8},
		\underbrace{\underbrace{1,\tfrac{1}{2},1}_{2^2-1},\tfrac{1}{4},
		\underbrace{1,\tfrac{1}{2},1}_{2^2-1}}_{\text{$2^3-1$ terms}}}_{\text{$2^4-1$ terms}},\tfrac{1}{16},
		\underbrace{
		\underbrace{\underbrace{1,\tfrac{1}{2},1}_{2^2-1},
		\tfrac{1}{4},\underbrace{1,\tfrac{1}{2},1}_{2^2-1}}_{\text{$2^3-1$ terms}},
		\tfrac{1}{8},
		\underbrace{\underbrace{1,\tfrac{1}{2},1}_{2^2-1},\tfrac{1}{4},
			\underbrace{1,\tfrac{1}{2},1}_{2^2-1}}_{\text{$2^3-1$ terms}}}_{\text{$2^4-1$ terms}},\tfrac{1}{32},\ldots	.
	\end{equation}
	In particular, the sequence $\{\alpha_n\}_{n=1}^{\infty}$ can be decomposed into
	infinitely many blocks of length $2^k$, each starting with a fixed palindrome of length
	$2^k-1$ and ending with a weight $\leq \frac{1}{2^k}$.  From this perspective, it is easy to see that
	$T$ is the norm limit of complex symmetric weighted shifts.  Indeed, if $\epsilon > 0$ is given then
	the weighted shift $T'$ having weights
	\begin{equation*}
		\beta_n = 
		\begin{cases}
			\alpha_n & \text{if $\alpha_n > \epsilon$},\\
			0 & \text{if $\alpha_n \leq \epsilon$},
		\end{cases}
	\end{equation*}
	satisfies $\norm{T-T'} < \epsilon$ and is complex symmetric since it is unitarily equivalent to a direct
	sum of matrices of the form \eqref{eq-Palindrome} having palindromic weights.
	
	Let us make a few remarks about the preceding construction which will help motivate our results.
	First of all, observe that the weight sequence $\{\alpha_n\}_{n=1}^{\infty}$ of the Kakutani shift \eqref{eq-Kakutani}
	has a subsequence which converges to zero and another which tends to $\sup\{\alpha_n\}_{n=1}^{\infty}$.
	As we will see, this behavior is typical of all irreducible weighted shifts belonging to $\overline{CSO}$
	(Theorem \ref{TheoremSubsequence}).  In particular, no irreducible weighted shift in $\overline{CSO}$
	is compact.  This agrees with our earlier remarks about compact operators.
	
	It turns out that every irreducible weighted shift which is \emph{approximately Kakutaki}, in a sense
	made precise in Section \ref{SectionApproximate}, belongs to $\overline{CSO}\backslash CSO$
	(Theorem \ref{TheoremApproximate}).  Among other things, this allows us to construct
	operators in $\overline{CSO}\backslash CSO$ whose moduli have desired spectral properties.
	
\section{Closures in the weak, strong, and strong-* topologies}\label{SectionClosures}
	Although we are primarily interested in studying the closure $\overline{CSO}$ of $CSO$ with respect to 
	the operator norm, let us first say a few words about the closure of $CSO$ with respect to several 
	other standard topologies on $\B(\h)$.  In particular, we consider
	the \emph{weak operator topology} (WOT), \emph{strong operator topology} (SOT),
	and \emph{strong-* topology} (SST).  We say that\smallskip
	\begin{enumerate}\addtolength{\itemsep}{0.5\baselineskip}
		\item $T_n \to T$ means that $\lim_{n\to\infty} \norm{T - T_n} = 0$,
		\item $T_n \to T \WOT$ means that $\lim_{n\to\infty} \inner{ (T-T_n)x,y} = 0$ for all $x,y \in \h$,
		\item $T_n \to T \SOT$ means that $\lim_{n\to\infty} \norm{ (T-T_n)x} = 0$ for all $x \in \h$,
		\item $T_n \to T \SST$ means that $T_n \to T \SOT$ and $T_n^* \to T^* \SOT$.
	\end{enumerate}
	\smallskip

	Observe that the containments
	\begin{equation*}
		\overline{CSO} \subseteq
		\overline{CSO}^{\SST} \subseteq \overline{CSO}^{\SOT} \subseteq \overline{CSO}^{WOT} \subseteq \B(\h)
	\end{equation*}	
	hold trivially.  The superscripts in the preceding chain indicate the topology with respect to which the closure of $CSO$ is taken,
	the absence of a superscript being reserved for the norm topology.  Let us also remark that the preceding notions
	have obvious analogues for conjugate-linear operators, which we employ without further comment.
	
	To proceed, we require the following well-known lemma \cite[Prop.~IX.1.3.d]{ConwayCFA}:

	\begin{Lemma}\label{LemmaSOT}
		Suppose that $\{e_i\}_{i=1}^{\infty}$ is an orthonormal basis of $\h$ and that 
		$T_n$ is a bounded sequence in $B(\h)$.  If $T_n e_i\to Te_i$ for all $i$, 
		then $T_n \to T \SOT$.
	\end{Lemma}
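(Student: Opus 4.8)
The plan is to run a routine three-$\epsilon$ argument, using the uniform boundedness of $\{T_n\}$ to control a tail and the hypothesis $T_n e_i \to T e_i$ to control a finite head.

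First I would set $M := \sup_n \norm{T_n}$, which is finite by assumption, and recall that $\norm{T} < \infty$ since $T \in \B(\h)$. Fix an arbitrary $x \in \h$ and an arbitrary $\epsilon > 0$. Because $\{e_i\}_{i=1}^{\infty}$ is an orthonormal basis, the partial Fourier sums of $x$ converge to $x$, so there exist $N \in \N$ and scalars $c_1,\dots,c_N$ such that $y := \sum_{i=1}^N c_i e_i$ satisfies $\norm{x - y} < \epsilon$.

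Then I would split, using the triangle inequality and $\norm{T - T_n} \le \norm{T} + M$,
\[
	\norm{(T - T_n)x} \;\le\; \norm{(T - T_n)(x - y)} + \norm{(T - T_n)y}
	\;\le\; (\norm{T} + M)\norm{x - y} + \sum_{i=1}^N |c_i|\,\norm{(T - T_n)e_i}.
\]
The first summand is at most $(\norm{T}+M)\epsilon$, independently of $n$. In the second summand the index set is finite and each term $\norm{(T - T_n)e_i}$ tends to $0$ as $n \to \infty$ by hypothesis, so there is $n_0$ with $\sum_{i=1}^N |c_i|\,\norm{(T - T_n)e_i} < \epsilon$ for all $n \ge n_0$. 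Hence $\norm{(T - T_n)x} \le (\norm{T} + M + 1)\epsilon$ for all $n \ge n_0$; since $\epsilon > 0$ was arbitrary this gives $\norm{(T - T_n)x} \to 0$, and since $x \in \h$ was arbitrary, $T_n \to T\ \SOT$.

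There is no serious obstacle here: the one genuinely indispensable ingredient is the uniform bound $M$, without which the operators $T - T_n$ cannot be dominated on the approximating error $x - y$; given that, the density of the finite span of $\{e_i\}$ together with the assumed pointwise convergence on basis vectors finishes the argument immediately.
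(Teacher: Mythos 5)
Your proof is correct and is the standard density-plus-uniform-boundedness argument; the paper itself gives no proof of this lemma, citing it as well known from Conway's text, and the argument behind that citation is exactly the three-$\epsilon$ splitting you carry out. Nothing is missing: the uniform bound $M$ and the pointwise convergence on the total set $\{e_i\}$ are precisely the two ingredients needed.
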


	We are now ready to prove the main result of this section.	
	Our approach is inspired by the arguments of \cite[Ex.~2]{CSPI}, which
	were in turn inspired by a matrix trick first observed in \cite[p.~793]{VPSBF}.
	
	\begin{Theorem}\label{TheoremClosures}
		$\overline{CSO}^{\SST} = \overline{CSO}^{\SOT} = \overline{CSO}^{WOT} = \B(\h)$.
	\end{Theorem}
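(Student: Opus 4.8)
Since the inclusions $\overline{CSO}^{\SST} \subseteq \overline{CSO}^{\SOT} \subseteq \overline{CSO}^{WOT} \subseteq \B(\h)$ are automatic, it suffices to prove $\B(\h) \subseteq \overline{CSO}^{\SST}$, i.e., that every $T \in \B(\h)$ is a strong-* limit of complex symmetric operators. Fix an orthonormal basis $\{e_i\}_{i=1}^\infty$ of $\h$ and, for each $n$, let $P_n$ denote the orthogonal projection onto $\h_n := \Span\{e_1,\ldots,e_n\}$. The operators $P_n T P_n$ converge to $T$ in the strong-* topology (this is standard: $P_n \to I$ SOT and the net is bounded, so $P_n T P_n x \to Tx$ and $(P_n T P_n)^* x = P_n T^* P_n x \to T^* x$), so the problem reduces to approximating each finite-rank "corner" $P_n T P_n$ by complex symmetric operators. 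The idea, following \cite[Ex.~2]{CSPI} and the matrix trick of \cite[p.~793]{VPSBF}, is to enlarge the $n \times n$ block to a $2n \times 2n$ block that is manifestly symmetric.

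**Main construction.** Represent $A := P_n T P_n$ as an $n\times n$ matrix acting on $\h_n$. On the space $\h_n \oplus \h_n$ (sitting inside $\h$ via $\{e_1,\ldots,e_n, e_{n+1},\ldots,e_{2n}\}$), consider the operator given in block form by $\minimatrix{A}{0}{0}{A^t}$, where $A^t$ is the transpose of $A$ with respect to the chosen basis. This operator is complex symmetric: with the conjugation $J$ on $\h_n\oplus\h_n$ defined by $J(x\oplus y) = (\overline{y}\oplus\overline{x})$ (entrywise conjugation in the $e_i$-coordinates followed by the flip), one checks $J\,\minimatrix{A}{0}{0}{A^t}^*\,J = \minimatrix{A}{0}{0}{A^t}$ by a direct $2\times 2$ block computation using $(A^t)^* = \overline{A}$. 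Extend this to all of $\h$ by taking the direct sum with a fixed complex symmetric operator on $\overline{\Span\{e_{2n+1},e_{2n+2},\ldots\}}$ — for instance the $0$ operator, which is trivially complex symmetric — and call the result $S_n \in CSO$.

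**Convergence.** It remains to verify $S_n \to T$ in the strong-* topology. For a fixed basis vector $e_i$, once $n \geq i$ we have $S_n e_i = P_n T P_n e_i = P_n T e_i$, which converges to $Te_i$ as $n\to\infty$ since $P_n \to I$ SOT; similarly $S_n^* e_i = P_n T^* P_n e_i \to T^* e_i$. The sequence $\{S_n\}$ is bounded (indeed $\norm{S_n} = \norm{P_n T P_n} \leq \norm{T}$, using $\norm{A^t} = \norm{A}$ for the transpose in an orthonormal basis), so Lemma \ref{LemmaSOT} applies to give $S_n \to T \SOT$ and $S_n^* \to T^* \SOT$, i.e., $S_n \to T \SST$. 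Hence $T \in \overline{CSO}^{\SST}$, and since $T$ was arbitrary the chain of closures collapses to $\B(\h)$.

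**Anticipated obstacle.** Nothing here is deep; the only point requiring care is the verification that the enlarged block $\minimatrix{A}{0}{0}{A^t}$ is genuinely complex symmetric with respect to an honest conjugation on all of $\h$ — one must confirm that $J$ as defined is conjugate-linear, isometric, and involutive, and that the chosen extension to the complementary subspace is compatible (taking the extension to be $0$, or any fixed complex symmetric operator, makes this routine). A secondary bookkeeping point is keeping the indexing of the "doubled" coordinates consistent as $n$ grows so that the restriction of $S_n$ to $\h_i$ stabilizes for $n \geq i$; this is purely notational.
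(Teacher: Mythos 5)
Your proof is correct and takes essentially the same approach as the paper: the paper's operator $A_n \oplus C_n A_n^* C_n \oplus 0$ (with $C_n$ an arbitrary conjugation on $\h_n$) reduces, for the entrywise conjugation, to exactly your $A \oplus A^{t} \oplus 0$, and both arguments then combine the bound $\norm{S_n}\leq\norm{T}$ with convergence on basis vectors via Lemma \ref{LemmaSOT} to conclude $S_n \to T \SST$.
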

	
	\begin{proof}
		Since the strong and weak operator topologies are both weaker than the strong-* topology,
		it suffices to prove that $\overline{CSO}^{\SST} = \B(\h)$.  		
		Let $T \in \B(\h)$, fix an orthonormal basis $\beta=\{e_1,e_2,\ldots\}$ of $\h$, and let $\h_n = \Span\{e_1,e_2,\ldots,e_n\}$.
		Define $A_n \in \B(\h_n)$ by insisting that $\inner{A_n e_k,e_j} = \inner{T e_k,e_j}$ for $1 \leq j,k \leq n$.  In other words,
		$A_n$ is simply the upper-left $n \times n$ principal submatrix of the matrix representation of $T$
		with respect to the basis $\beta$.
		Let $C_n$ be an arbitrary conjugation on $\h_n$ and observe that the operator
		$T_n = A_n \oplus C_n A_n^* C_n \oplus 0$ on $\h$ is complex symmetric by \cite[p.~793]{VPSBF}.
		Since $n > i$ implies that
		\begin{equation}\label{eq-SimilarEstimate}
			\norm{T e_i - T_n e_i}^2 = \sum_{j=n+1}^{\infty} |\inner{T e_i,e_j}|^2 ,
		\end{equation}
		it follows that $T_n e_i \to Te_i$ for each fixed $i$.  Since $\norm{T_n} = \norm{A_n} \leq \norm{T}$ by construction,
		it follows from Lemma \ref{LemmaSOT} that $T_n \to T \SOT$.  An estimate similar to \eqref{eq-SimilarEstimate}
		and another appeal to Lemma \ref{LemmaSOT} confirm that $T_n^* \to T^* \SOT$ as well, whence
		$T_n \to T \SST$.
	\end{proof}

	Among other things, the preceding theorem implies that
	\begin{equation*}
		\overline{CSO} \subsetneq
		\overline{CSO}^{\SST} = \overline{CSO}^{\SOT} = \overline{CSO}^{WOT} = \B(\h).
	\end{equation*}	
	In particular, the closure of $CSO$ is only of interest if the closure is taken respect to the norm topology on $\B(\h)$.

\section{Compact operators}\label{SectionCompact}

	Having seen that $\overline{CSO}$ is a proper subset of $\B(\h)$, one naturally wishes to 
	know how various well-studied classes of operators intersect $\overline{CSO}$.  It turns out that a complete answer
	can be given in the case of compact operators.

	\begin{Theorem}\label{TheoremCompact}
		If $T \in \overline{CSO}$ and $T$ is compact, then $T \in CSO$.
	\end{Theorem}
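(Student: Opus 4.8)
The plan is to take a sequence of complex symmetric operators $T_n$ with $T_n \to T$ in norm, $T$ compact, and extract from the conjugations $C_n$ implementing $T_n = C_n T_n^* C_n$ a single conjugation $C$ implementing $T = CT^*C$. The key structural fact to exploit is the one the introduction flags as essential: since $T$ is compact, $|T| = \sqrt{T^*T}$ has spectrum $\{0\} \cup \{s_1 > s_2 > \cdots\}$ with each $s_k$ an eigenvalue of finite multiplicity $m_k$, and $\h = \ker T \oplus \bigoplus_k E_k$ where $E_k = \ker(|T| - s_k)$ is finite-dimensional. I would first observe that $T_n \to T$ in norm forces $|T_n| \to |T|$ in norm (continuity of the square root on positive operators, or $\bigl\| |T_n| - |T| \bigr\| \le \|T_n - T\|^{1/2}(\cdots)$-type estimates), so the spectral projections $P_n^{(k)}$ of $|T_n|$ onto the spectral window around $s_k$ converge in norm to $P^{(k)}$, the projection onto $E_k$, by the Riesz functional calculus with a fixed contour separating $s_k$ from the rest of $\sigma(|T|)$. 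In particular $\operatorname{rank} P_n^{(k)} = m_k$ for $n$ large.

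The second step is to pass from $T_n = C_n T_n^* C_n$ to a statement purely about the polar/spectral data. Writing $T_n = U_n |T_n|$, complex symmetry gives (see the Takagi-type decomposition behind \cite[Sect.~2.4]{CCO}) that $C_n$ intertwines $|T_n|$ with itself up to the partial isometry, and more to the point $C_n$ maps each spectral subspace of $|T_n|$ onto the corresponding spectral subspace of $|T_n^*|$, which for a compact operator coincides spectrally; the cleanest route is to recall that a complex symmetric operator is unitarily equivalent to a symmetric matrix, hence admits a Takagi factorization $T_n = Q_n D_n Q_n^{\mathrm{t}}$ on each finite-dimensional reducing piece. I would use this to arrange, after the norm convergence $P_n^{(k)} \to P^{(k)}$, a convergent subsequence of the restricted conjugations $C_n|_{E_k}$ (the unitary group of a fixed finite-dimensional space is compact, and conjugations form a closed subset), obtaining a limiting conjugation $C^{(k)}$ on $E_k$. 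A diagonal argument over $k$, together with a separate argument on $\ker T$ (where $T$ acts as $0$ and any conjugation works, but one must check compatibility with $T^*$, i.e. with $\ker T^* = (\operatorname{ran} T)^\perp$), assembles $C = \bigoplus_k C^{(k)} \oplus C_0$.

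The third step is to verify $T = CT^*C$ on the dense span of the $E_k$ and $\ker T$: on each $E_k$ this is the limit of the identities $T_n = C_n T_n^* C_n$ after restricting and using $T_n \to T$, $C_n|_{E_k} \to C^{(k)}$ in the (finite-dimensional, hence uniform) sense; one has to be slightly careful because $T_n$ does not exactly preserve $E_k$, only $E_k$ up to $O(\|T_n - T\|)$, so the identity holds only in the limit — which is exactly what is wanted. The main obstacle I expect is precisely this bookkeeping: controlling the ``off-diagonal leakage'' of $T_n$ between distinct spectral subspaces and into $\ker T$, and ensuring the pieces $C^{(k)}$ glue into a genuine conjugation on all of $\h$ (isometric, involutive, conjugate-linear) rather than just on a dense subspace — boundedness is automatic since each $C^{(k)}$ is isometric, so $C$ extends by continuity, but one must confirm $C^2 = I$ globally and that the relation $T = CT^*C$, established on a core, extends to all of $\h$ by density and boundedness of both sides. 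A clean way to finesse the $\ker T$ issue is to note $\dim \ker T = \dim \ker T^*$ must hold (else $T \notin \overline{CSO}$ by the index-type obstruction \cite[Prop.~1]{CSOA} together with stability of the index under norm limits), so a conjugation swapping $\ker T$ and $\ker T^*$ exists, and compatibility is vacuous there since $T$ and $T^*$ both vanish on the relevant subspace.
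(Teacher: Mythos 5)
Your overall strategy is sound and, at its core, takes a genuinely different route to the hardest step. The paper also extracts a pointwise-convergent subsequence of the conjugations and invokes the SOT-closedness of the set of conjugations, but it establishes the key localization fact --- that $C_n$ asymptotically carries each eigenspace $E_k=\ker(|T|-s_k)$ into $UE_k$, where $T=U|T|$ --- by a somewhat delicate induction on $k$, via norm estimates that exploit the strict gap $s_k>s_{k+1}$. Your route through the functional calculus is arguably cleaner: from $T_n=C_nT_n^*C_n$ one gets $C_n|T_n|^2C_n=|T_n^*|^2$, hence $C_n|T_n|C_n=|T_n^*|$, so $C_n$ carries the Riesz spectral subspace of $|T_n|$ for a small contour around $s_k$ onto the corresponding subspace of $|T_n^*|$; since $|T_n|\to|T|$ and $|T_n^*|\to|T^*|$ in norm and $s_k$ is isolated in $\sigma(|T|)$ of finite multiplicity (this is exactly where compactness enters), these subspaces converge in norm to $E_k$ and $UE_k$, and the localization follows with no induction. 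I would only insist that you state and use the identity $C_n|T_n|C_n=|T_n^*|$ explicitly: the appeal to a Takagi factorization ``on each finite-dimensional reducing piece'' is both unnecessary and not obviously meaningful for the infinite-dimensional, non-normal approximants $T_n$.

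One step does fail as written: the claim that $\dim\ker T=\dim\ker T^*$ follows from ``stability of the index under norm limits.'' A compact operator on an infinite-dimensional space is never Fredholm, nor even semi-Fredholm (its range is closed only when it has finite rank, and then both kernel and cokernel are infinite-dimensional), so index stability is unavailable; moreover this equality is part of what the theorem is proving, not something you may assume --- the compact weighted shift with nonvanishing weights tending to $0$ has $\dim\ker T=0\neq 1=\dim\ker T^*$ and must be excluded from $\overline{CSO}$ by the argument, not fed into it. The correct and simpler fix is the paper's opening reduction: set $\K=\overline{\ran T+\ran T^*}$, note $\K^{\perp}\subseteq\ker T\cap\ker T^*$ so that $T=T|_{\K}\oplus 0$, and reduce to the case $\K=\h$ via \cite[Lem.~1]{CSPI}. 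After that reduction the vectors $x_m$ and $y_m=Ux_m$ span a dense subspace, your subsequence of conjugations converges SOT on all of $\h$ by uniform boundedness, the limit is a conjugation by SOT-closedness, and $T=CT^*C$ follows from a single global limiting estimate --- no block-by-block verification on the $E_k$, and hence none of the ``off-diagonal leakage'' bookkeeping you anticipate, is required. With that repair your argument goes through.
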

	
	Before proving Theorem \ref{TheoremCompact}, we require two simple lemmas.
	
	\begin{Lemma}\label{LemmaConjugationsSOT}
		The set of all conjugations on $\h$ is SOT closed.
	\end{Lemma}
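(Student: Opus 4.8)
The plan is to show that a SOT limit of conjugations is again a conjugation, i.e., that it is conjugate-linear, isometric, and an involution. Suppose $C_n$ is a net (or sequence) of conjugations with $C_n \to C$ in the strong operator topology, meaning $C_n x \to C x$ in norm for every $x \in \h$. First I would verify conjugate-linearity: for scalars $a,b$ and vectors $x,y$, we have $C_n(ax+by) = \overline{a} C_n x + \overline{b} C_n y$, and passing to the limit on both sides (using that norm convergence is preserved under the continuous operations of scalar multiplication and addition) gives $C(ax+by) = \overline{a} C x + \overline{b} C y$. Next, isometry: since $\norm{C_n x} = \norm{x}$ for all $n$ and $C_n x \to Cx$ in norm, the continuity of the norm yields $\norm{Cx} = \lim_n \norm{C_n x} = \norm{x}$, so $C$ is isometric (in particular bounded, with $\norm{C}=1$).

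The remaining point, and the one requiring a small amount of care, is the involution property $C^2 = I$. Fix $x \in \h$. The natural move is to write $C^2 x = C(Cx)$ and try to pass to the limit in $C_n(C_n x) = x$. The subtlety is that SOT convergence $C_n \to C$ does not by itself let us replace the \emph{inner} argument $C_n x$ by $Cx$ \emph{and simultaneously} apply the outer operator and pass to the limit, since SOT convergence is not uniform. However, this is handled by the standard estimate
\begin{equation*}
	\norm{C^2 x - x} \leq \norm{C(Cx) - C_n(Cx)} + \norm{C_n(Cx) - C_n(C_n x)} + \norm{C_n(C_n x) - x}.
\end{equation*}
The first term tends to $0$ because $C_n \to C$ SOT (applied to the fixed vector $Cx$); the second term equals $\norm{C_n(Cx - C_n x)} = \norm{Cx - C_n x}$ by the isometry of $C_n$, which tends to $0$ since $C_n x \to Cx$; and the third term is identically $0$ since $C_n^2 = I$. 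Hence $\norm{C^2 x - x} = 0$ for every $x$, so $C^2 = I$. Combining the three properties, $C$ is a conjugation, which shows the set of conjugations is SOT closed.

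The main obstacle is precisely the involution step: one must resist the temptation to pass to the limit naively and instead insert the intermediate term $C_n(Cx)$, exploiting that the isometry of each $C_n$ converts the "error in the inner slot" into a quantity controlled by $\norm{C_n x - Cx}$. Everything else (conjugate-linearity, isometry) is an immediate consequence of the continuity of the vector-space operations and of the norm under norm convergence.
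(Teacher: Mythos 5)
Your proof is correct and follows essentially the same route as the paper's: the involution property is handled by inserting the intermediate term $C_n(Cx)$ and using the isometry of $C_n$ to control the inner-slot error, and the isometry and conjugate-linearity of the limit follow from continuity of the norm and the vector-space operations. The only cosmetic difference is that the paper starts from $\norm{C^2x - x} = \norm{C^2x - C_n^2x}$ and splits into two terms rather than three.
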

	
	\begin{proof}
		Let $C_n$ be a sequence of conjugations on $\h$ such that $C_n \to C \SOT$.
		For each $x \in \h$ we have
		\begin{align*}
			\norm{C^2 x  - x}
			&= \norm{ C^2 x - C_n^2 x } \\
			&\leq \norm{ C^2 x - C_n C x} + \norm{ C_n C x - C_n^2 x} \\
			&= \norm{ (C -  C_n) C x} + \norm{ (C - C_n) x},
		\end{align*}
		which tends to zero by hypothesis whence $C^2 = I$.
		Next observe that
		\begin{equation*}
			| \norm{Cx} - \norm{x} |
			= | \norm{Cx} - \norm{C_n x} | 
			\leq \norm{Cx - C_n x} \to 0,
		\end{equation*}
		from which it follows that $C$ is isometric.  Since $C$ is obviously conjugate-linear, we conclude that 
		$C$ is a conjugation on $\h$.
	\end{proof}	

	\begin{Lemma}\label{LemmaConjugations}
		If $T \in \overline{CSO}$, then there exist conjugations $C_n$ such that $C_nT^*C_n\!\to\!T$.
	\end{Lemma}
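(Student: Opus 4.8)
The plan is to unwind the definition of $\overline{CSO}$ and push the approximating complex symmetric operators out of the estimate using only two elementary isometry facts. Since $T \in \overline{CSO}$, we may choose a sequence $T_n$ of complex symmetric operators with $T_n \to T$ in operator norm. For each $n$, fix a conjugation $C_n$ such that $T_n = C_n T_n^* C_n$; this is exactly what it means for $T_n$ to be complex symmetric. These $C_n$ are the conjugations we will use, and the goal is to show $C_n T^* C_n \to T$.

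The key point is that for any conjugation $C$ and any $A \in \B(\h)$ one has $\norm{CAC} = \norm{A}$. Indeed, $C$ is a conjugate-linear isometric bijection of $\h$, so as $x$ ranges over the unit sphere of $\h$ so does $Cx$, and hence $\norm{CACx} = \norm{A(Cx)}$ shows $\sup_{\norm{x}=1}\norm{CACx} = \sup_{\norm{y}=1}\norm{Ay}$. Combining this with the elementary identity $\norm{B^*} = \norm{B}$, I would estimate
\begin{align*}
	\norm{C_n T^* C_n - T}
	&\leq \norm{C_n T^* C_n - C_n T_n^* C_n} + \norm{T_n - T} \\
	&= \norm{C_n (T^* - T_n^*) C_n} + \norm{T_n - T} \\
	&= \norm{T^* - T_n^*} + \norm{T_n - T} = 2\norm{T - T_n},
\end{align*}
which tends to $0$ by the choice of $T_n$. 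Hence $C_n T^* C_n \to T$, as desired.

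There is no real obstacle in this argument; the lemma is essentially a bookkeeping step that isolates the conjugations hidden inside the hypothesis $T \in \overline{CSO}$ so that they can be fed into the SOT-closedness result of Lemma \ref{LemmaConjugationsSOT} in the proof of Theorem \ref{TheoremCompact}. If anything, the only subtlety worth stating explicitly (rather than leaving to the reader) is the norm identity $\norm{CAC} = \norm{A}$ for a conjugation $C$, since $C$ is conjugate-linear rather than linear; but this follows immediately from $C$ being a surjective isometry of $\h$.
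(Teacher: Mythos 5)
Your proof is correct and follows essentially the same route as the paper: take $T_n \to T$ with $T_n = C_n T_n^* C_n$ and apply the triangle inequality together with the isometry properties of $C_n$ and the adjoint to get $\norm{C_n T^* C_n - T} \leq 2\norm{T - T_n}$. The only difference is that you explicitly justify $\norm{C A C} = \norm{A}$ for a conjugation $C$, which the paper leaves implicit.
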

	
	\begin{proof}
		If $T_n$ is a sequence of operators such that $T_n \to T$ and $C_n$ is a sequence of
		conjugations such that $T_n = C_n T_n^* C_n$, then
		\begin{align*}
			\norm{T - C_n T^* C_n}
			&\leq \norm{T - C_n T_n^*C_n} + \norm{C_n T_n^* C_n - C_n T^* C_n} \\
			&= \norm{T - T_n} + \norm{T_n^* - T^*} \\
			&= 2\norm{T - T_n}
		\end{align*}
		tends to zero.
	\end{proof}

	\begin{proof}[Pf.~of Theorem \ref{TheoremCompact}]
		Suppose that $T$ belongs to $\overline{CSO}$.  
		Let $\K = \overline{ \ran T + \ran T^*}$ so that $\K^{\perp} \subseteq \ker T \cap \ker T^*$ is a reducing
		subspace upon which $T$ vanishes.  In other words, with respect to the orthogonal
		decomposition $\h = \K \oplus \K^{\perp}$ we have $T = T|_{\K} \oplus 0$.  By \cite[Lem.~1]{CSPI}
		we know that $T \in \B(\h)$ is complex symmetric if and only if $T|_{\K} \in \B(\K)$
		is complex symmetric.  Without loss of generality, we therefore assume that
		\begin{equation}\label{eq-Density}
			\overline{ \ran T + \ran T^* } = \h.
		\end{equation}
		
		Let us briefly discuss our main approach.
		Since $T$ belongs to $\overline{CSO}$, there exist conjugations $C_n$ such that $C_n T^* C_n \to T$ 
		by Lemma \ref{LemmaConjugations}.  
		It suffices to prove that there exists a subsequence $C_{n_j}$ of the $C_n$ which converges
		pointwise on both $\ran T$ and $\ran T^*$.  Indeed, the uniform boundedness of the $C_{n_j}$
		and the assumption \eqref{eq-Density} will then ensure that $C_{n_j}$ is SOT convergent on all of $\h$, whence
		there exists a conjugation $C$ such that $C_{n_j} \to C \SOT$
		by Lemma \ref{LemmaConjugationsSOT}.  The desired conclusion $T = CT^*C$ will then follow 
		by a simple limiting argument.
		
		By \cite[VIII.3.11]{ConwayCFA}, there exists a partial isometry
		$U$ with $\ker T = \ker U = \ker |T|$ such that $T = U|T|$.
		Here $|T|$ denotes the compact selfadjoint operator $\sqrt{T^*T}$.
		Let $\lambda_1 \geq \lambda_2 \geq \cdots > 0$ denote the nonzero eigenvalues of $|T|$,
		repeated according to their multiplicity, and let $x_1,x_2,\ldots$ denote corresponding
		orthonormal eigenvectors.  The vectors $y_i = Ux_i$ for $i = 1,2,\ldots$ are also orthonormal and we see that 
		\begin{equation}\label{eq-Ranges}
			\overline{\Span\{y_1,y_2,\ldots \}} = \overline{\ran T}, 
			\qquad	\overline{\Span\{x_1,x_2,\ldots \}} = \overline{\ran T^*}.
		\end{equation}
		By the remarks of the preceding paragraph, it suffices to find a subsequence $C_{n_j}$ of $C_n$
		such that $\lim_{j\to\infty} C_{n_j} x_m$ and $\lim_{j\to\infty} C_{n_j} y_m$ exist for
		$m=1,2,\ldots$. 
	
		By compactness, the eigenspaces for $|T|$ corresponding to its nonzero
		eigenvalues are finite-dimensional and hence we may define a sequence 
		$\ell_k$ of indices such that $\lambda_{\ell_k} > \lambda_{\ell_{k+1}}$ 
		and $\lambda_{\ell_k} = \lambda_i$ for $\ell_k \leq i < \ell_{k+1}$.
		Let $P_k$ denote the orthogonal projection onto the spectral subspace
		\begin{equation}\label{eq-PkSpace}
			\ker(|T| - \lambda_{\ell_k}I) = \Span\{x_{\ell_k}, x_{\ell_k + 1}, ..., x_{\ell_{k+1} - 1} \}  
		\end{equation}
		and observe that $Q_k = UP_kU^*$ is the orthogonal projection onto
		\begin{equation*}	
			\Span\{y_{\ell_k},y_{\ell_k + 1}, ..., y_{\ell_{k+1} - 1} \}.
		\end{equation*}
		The most difficult step in the proof of Theorem \ref{TheoremCompact} is the verification
		of the following claim:
	
		\begin{Claim}
			For $\ell_k \leq m < \ell_{k+1}$, we have 
			\begin{align}
				\lim_{n \to \infty}\norm{P_k C_n y_m} &=1,\label{eq-MegaClaim01}\\
				\lim_{n \to \infty}\norm{Q_k C_n x_m} &= 1.\label{eq-MegaClaim02}
			\end{align}
		\end{Claim}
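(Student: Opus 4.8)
The plan is to extract from the relation $C_nT^*C_n\to T$ the fact that the vectors $C_nx_m$ and $C_ny_m$ behave, in the limit, like singular vectors of $T$ associated to the singular value $\lambda_m$, and then to use the compactness of $T$ --- specifically the isolation of the nonzero eigenvalues of $|T|$ --- to conclude that these approximate singular vectors must concentrate inside the exact eigenspaces $\ran P_k$ and $\ran Q_k$.

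First I would record two approximate intertwining relations. Since each $C_n$ is a conjugate-linear isometry with $C_n^2 = I$, applying $C_n$ to both sides of the convergence $C_nT^*C_nx_m \to Tx_m = \lambda_m y_m$ and using that $C_n$ is isometric together with $C_n(\lambda_m y_m) = \lambda_m C_ny_m$ (as $\lambda_m > 0$) gives $\norm{T^*C_nx_m - \lambda_m C_ny_m} \to 0$. Passing to adjoints in $C_nT^*C_n \to T$ and using $(C_nAC_n)^* = C_nA^*C_n$ yields $C_nTC_n \to T^*$, and the same maneuver applied at $y_m$ gives $\norm{TC_ny_m - \lambda_m C_nx_m} \to 0$. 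Now set $v_n = C_ny_m$, so $\norm{v_n} = 1$. Combining the two estimates, $T^*Tv_n = \lambda_m T^*(C_nx_m) + o(1) = \lambda_m^2 v_n + o(1)$, i.e. $\norm{\,|T|^2v_n - \lambda_m^2 v_n\,} \to 0$. Thus $v_n$ is an approximate eigenvector of the compact positive operator $|T|^2$ for the eigenvalue $\lambda_m^2$. Decomposing $v_n$ orthogonally along the spectral resolution $\h = \ker|T| \oplus \bigoplus_j \ker(|T| - \mu_j)$, where $\mu_1 > \mu_2 > \cdots > 0$ are the distinct nonzero eigenvalues and $\mu_{j_0} = \lambda_m$ (so that $P_k$ is the orthogonal projection onto $\ker(|T| - \mu_{j_0})$), and writing $E_0, E_j$ for the associated projections, we get $\norm{\,|T|^2v_n - \lambda_m^2 v_n\,}^2 = \lambda_m^4\norm{E_0v_n}^2 + \sum_j (\mu_j^2 - \lambda_m^2)^2 \norm{E_jv_n}^2$. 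Because $0$ is the only accumulation point of $\{\mu_j^2\}$, there is $\delta > 0$ with $|\mu_j^2 - \lambda_m^2| \geq \delta$ for all $j \neq j_0$, so the right-hand side dominates $\min(\lambda_m^4, \delta^2)\bigl(1 - \norm{P_kv_n}^2\bigr)$; since the left-hand side tends to $0$, we conclude $\norm{P_kC_ny_m} = \norm{P_kv_n} \to 1$, which is \eqref{eq-MegaClaim01}.

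The companion estimate \eqref{eq-MegaClaim02} follows by the symmetric argument applied to $w_n = C_nx_m$: the same two relations give $\norm{TT^*w_n - \lambda_m^2 w_n} \to 0$, and $Q_k = UP_kU^*$ is exactly the spectral projection of the compact operator $TT^*$ for the eigenvalue $\lambda_m^2$ --- its range $\Span\{y_{\ell_k}, \ldots, y_{\ell_{k+1}-1}\}$ lies in $\ker(TT^* - \lambda_m^2 I)$ and has the same dimension as $\ker(T^*T - \lambda_m^2 I)$, hence fills the whole eigenspace --- so the identical perturbation estimate yields $\norm{Q_kC_nx_m} \to 1$.

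The step I expect to be the crux is the passage from $C_nT^*C_n \to T$ to the approximate intertwinings $T^*C_nx_m \approx \lambda_m C_ny_m$ and $TC_ny_m \approx \lambda_m C_nx_m$: this is where one converts a statement about how $C_n$ conjugates $T^*$ to $T$ as operators into information about the individual vectors $C_nx_m$, $C_ny_m$, and it is precisely here that the \emph{isometric involution} structure of the conjugations is used, together with the fact that $x_m$, $y_m$ are genuine singular vectors of $T$. Everything after that is a routine perturbation argument, and it relies on compactness only through the spectral gap $\delta$ around $\lambda_m^2$ --- the same feature that makes it hard to envision weakening the compactness hypothesis.
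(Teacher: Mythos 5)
Your proof is correct, and it takes a genuinely different---and arguably cleaner---route than the paper's. The paper proves the Claim by induction on $k$: from $C_nT^*C_nx_m\to Tx_m$ it only extracts the one-sided information $\norm{T^*C_nx_m}\to\lambda_{\ell_k}$, and since $\norm{T^*C_nx_m}^2=\sum_i\lambda_i^2|\inner{C_nx_m,y_i}|^2$ could attain this value by splitting mass between eigenvalues larger and smaller than $\lambda_{\ell_k}$, the paper must first kill the coefficients against $y_i$ with $i<\ell_k$ via the inductive hypothesis for the earlier (larger) eigenvalues, and then recovers \eqref{eq-MegaClaim01} from \eqref{eq-MegaClaim02} by a Bessel-inequality counting argument over the finite-dimensional eigenspace. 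You sidestep the induction entirely by also invoking the adjoint convergence $C_nTC_n\to T^*$ (which is free, since the adjoint is isometric for the operator norm); this upgrades the one-sided norm information to the two-sided approximate intertwinings $T^*C_nx_m\approx\lambda_m C_ny_m$ and $TC_ny_m\approx\lambda_m C_nx_m$, whose composition exhibits $C_ny_m$ and $C_nx_m$ as approximate eigenvectors of $T^*T$ and $TT^*$ for the isolated eigenvalue $\lambda_m^2$, after which the spectral-gap estimate handles each $m$ independently. Your identification of $\ran Q_k$ with the full $\lambda_m^2$-eigenspace of $TT^*$ is also sound: any such eigenvector lies in $\ran T\subseteq\overline{\Span\{y_1,y_2,\ldots\}}$, and expanding it there forces it into $\Span\{y_{\ell_k},\ldots,y_{\ell_{k+1}-1}\}$. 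Both arguments use compactness in the same essential way, through the isolation of the nonzero eigenvalues of $|T|$; your version makes that dependence more transparent, while the paper's version isolates in \eqref{eq-DecreasingToZero} the precise gap $\lambda_{\ell_k}^2-\lambda_{\ell_{k+1}}^2>0$ that fails to be available for general non-compact operators.
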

		
		\begin{proof}[Pf.~of Claim]
			We proceed by induction on $k$.  Suppose for our inductive hypothesis that 
			\begin{equation}\label{eq-PCUxm}
				\lim_{n \to \infty}\norm{P_j C_n y_i} = \lim_{n \to \infty}\norm{Q_j C_n x_i}  =1
			\end{equation}
			holds for all $\ell_j \leq i < \ell_{j+1}$ whenever $0 < j < k$.  Observe that this is trivially true
			if $k = 1$ since no corresponding $j$ exist.  
			
			Let $\ell_k \leq m < \ell_{k+1}$ and for each fixed $i < \ell_k$
			let $j$ be the unique index $j < k$ such that $\ell_j \leq i < \ell_{j+1}$.
			By the inductive hypothesis \eqref{eq-PCUxm}, we see that 
			\begin{equation}\label{eq-IPCy}
				\lim_{n \to \infty}(I - P_j) C_n y_i = 0
			\end{equation}
			since $C_ny_i$ is a unit vector.  For $i < \ell_k$ it follows from \eqref{eq-IPCy} that
			\begin{align}
				\lim_{n \to \infty} \inner{C_n x_m, y_i} 
				&= \lim_{n \to \infty} \inner{C_n y_i ,x_m } \nonumber \\
				&= \lim_{n \to \infty}\inner{P_j C_n y_i,  x_m} + \lim_{n \to \infty} \inner{(I - P_j) C_n y_i,  x_m} \nonumber \\
				&= \lim_{n \to \infty} \inner{C_n y_i,  0} +\inner{0,  x_m} \nonumber \\
				&= 0 \label{eq-Cnxmyi}
			 \end{align}	 
			 since $P_j x_m = 0$ by definition.
		
			Suppose that $0 < \epsilon < 4 \lambda_{\ell_k}^2$.
			Since $C_nT^*C_n \to T$ there exists some $N_1$ so that 
			\begin{equation*}
				n \geq N_1 \quad \Rightarrow \quad \norm{C_n T^*C_n x_m  - Tx_m} 
				< \frac{\epsilon}{4\lambda_{\ell_k}}
			\end{equation*}
			holds whenever $\ell_k \leq m < \ell_{k+1}$. 
			On the other hand, by \eqref{eq-Cnxmyi} there exists some $N_2$ such that 
			\begin{equation}\label{eq-SeriesE2}
				n \geq N_2 \quad \Rightarrow \quad
				\sum_{i=1}^{\ell_k - 1} \lambda_i^2 |\inner{C_n x_m, y_i}|^2 < \frac{\epsilon}{2}.
			\end{equation}
			Since $\norm{Tx_m} = \lambda_{\ell_k}$, for $n \geq N = \max\{N_1, N_2\}$ it follows that
			\begin{align*}
				\lambda_{\ell_k}^2 - \frac{\epsilon}{2}
				&< \lambda_{\ell_k}^2 - \frac{\epsilon}{2} + \frac{\epsilon^2}{16\lambda_{\ell_k}^2} \\
				&< \left(\lambda_{\ell_k} - \frac{\epsilon}{4\lambda_{\ell_k}} \right)^2 \\
				&\leq \left( \norm{ Tx_m} - \norm{C_n T^* C_n x_m - Tx_m} \right)^2 \\
				&\leq \norm{C_nT^*C_nx_m}^2 \\
				&=\norm{ T^*C_n x_m}^2 \\
				& = \sum_{i=1}^\infty  |\inner{T^*C_nx_m, x_i}|^2 && \text{by \eqref{eq-Ranges}}\\
				& = \sum_{i=1}^\infty   |\inner{|T|U^*C_nx_m, x_i}|^2 \\
				& = \sum_{i=1}^\infty   \lambda_i^2 |\inner{U^*C_nx_m, x_i}|^2 \\
				& = \sum_{i=1}^\infty \lambda_i^2  |\inner{C_nx_m, y_i}|^2 \\
				&< \frac{\epsilon}{2} + \sum_{i = \ell_k}^\infty \lambda_i^2 |\inner{C_n x_m, y _i}|^2 && \text{by \eqref{eq-SeriesE2}} \\
				&< \frac{\epsilon}{2}+ \lambda_{\ell_k}^2 \underbrace{ \sum_{i = \ell_k}^{\ell_{k+1}-1}  |\inner{C_n x_m, y _i}|^2 }_{d_{n,m}}+ 
					\lambda_{\ell_{k+1}}^2  \sum_{i = \ell_{k+1}}^\infty  |\inner{C_n x_m, y _i}|^2  \\
				&\leq \frac{\epsilon}{2}+ \lambda_{\ell_k}^2 d_{n,m}+
					\lambda_{\ell_{k+1}}^2 (1 - d_{n,m}).
			\end{align*}
			The final inequality follows since $C_n x_m$ is a unit vector and $y_{\ell_k}, y_{\ell_k+1},\ldots$ is an orthonormal set.
			Rearranging things somewhat we see that
			\begin{equation}\label{eq-DecreasingToZero}
				n\geq N \quad \Rightarrow \quad 0 \leq (\underbrace{\lambda_{\ell_k}^2 - \lambda_{\ell_{k+1}}^2}_{>0}) (1-d_{n,m}) < \epsilon,
			\end{equation}
			whence 
			\begin{equation}\label{eq-dnm}
				\lim_{n\to\infty} d_{n,m} = 1
			\end{equation}
			whenever $\ell_k \leq m < \ell_{k+1}$.  Since
			\begin{equation*}
				d_{n,m} = \sum_{i = \ell_k}^{\ell_{k+1}-1}  |\inner{ C_n x_m, Q_k y _i}|^2 = \norm{Q_k C_n x_m}^2,
			\end{equation*}
			we obtain the second condition \eqref{eq-MegaClaim02} of the claim.  To conclude the induction,
			we need to establish the first condition \eqref{eq-MegaClaim01}.
		
			Summing \eqref{eq-dnm} over $\ell_k \leq i < \ell_{k+1}$ we obtain
			\begin{align*}
				\ell_{k+1} - \ell_k
				&= \lim_{n\to\infty}\sum_{i = \ell_k}^{\ell_{k+1} - 1} d_{n,i} \\
				&= \lim_{n\to\infty} \sum_{i = \ell_k}^{\ell_{k+1} - 1} \sum_{m = \ell_k}^{\ell_{k+1} - 1} |\inner{C_nx_i, y_m}|^2 \ \\
				&=  \sum_{m = \ell_k}^{\ell_{k+1} - 1} \left(\lim_{n\to\infty}\sum_{i = \ell_k}^{\ell_{k+1} - 1}|\inner{C_ny_m, x_i}|^2  \right)\\
				&\leq \sum_{m = \ell_k}^{\ell_{k+1} - 1} 1  \\
				&= \ell_{k+1} - \ell_k
			\end{align*}
			by applying Bessel's inequality to the unit vector $C_n y_m$ and using the fact that
			set $x_{\ell_k},x_{\ell_k+1},\ldots, x_{\ell_{k+1}-1}$ is orthonormal.  In particular, the preceding tells us that
			\begin{equation*}
				\lim_{n\to\infty} \norm{P_k C_n y_m}^2
				=\lim_{n\to\infty}\sum_{i = \ell_k}^{\ell_{k+1} - 1}|\inner{C_ny_m, P_k x_i}|^2 = 1,
			\end{equation*}
			which is the first condition \eqref{eq-MegaClaim01} of the claim.  This concludes the proof of the claim.
		\end{proof}
		
		We now wish to prove that there exists a subsequence $C_{n_j}$ of $C_n$ such that
		$C_{n_j} x_m$ and $C_{n_j} y_m$ converge for each $m=1,2,3,\ldots$.
		For each fixed $m$ there exists a unique $k$ such that $\ell_k \leq m < \ell_{k+1}$.
		Since the sets $\{ P_k C_n y_m : n = 1,2,\ldots\}$ and $\{  Q_k C_n x_m: n=1,2,\ldots\}$
		are bounded subsets of the finite-dimensional spaces $\ran P_k$ and $\ran Q_k$, respectively,
		it follows from a standard subsequence refinement argument that there exists vectors $y'_m \in \ran P_k$ and $x'_m \in \ran Q_k$ and 
		a subsequence $C_{n_j}$ of $C_n$ such that 
		\begin{align}
			\lim_{j\to\infty} P_k C_{n_j} y_m &= y'_m,\label{eq-PkCnjymCym}\\
			\lim_{j\to\infty} Q_k C_{n_j} x_m &= x'_m.\label{eq-QkCnjxmCxm}
		\end{align}
		Putting this all together we find that
		\begin{align*}
			\lim_{j\to\infty} \norm{C_{n_j} y_m - y'_m}
			&\leq\lim_{j\to\infty}  \norm{(I -P_k)C_{n_j} y_m } + \lim_{j\to\infty}\norm{ P_k C_{n_j} y_m - y'_m} \\
			&=\lim_{j\to\infty}  \sqrt{ \norm{C_{n_j} y_m}^2 - \norm{ P_k C_{n_j} y_m}^2} + 0\\
			&= \sqrt{ 1 - \lim_{j\to\infty} \norm{ P_k C_{n_j} y_m}^2} \\
			&=0
		\end{align*}
		by \eqref{eq-PkCnjymCym} and \eqref{eq-MegaClaim01}, respectively.  Thus $\lim_{j\to\infty} C_{n_j} y_m = y'_m$,
		as desired.  An analogous argument confirms that $\lim_{j\to\infty} C_{n_j} x_m = x'_m$ as well.  This concludes the proof
		of Theorem \ref{TheoremCompact}.
	\end{proof}

\section{Weighted shifts}\label{SectionShifts}

	We turn our attention now toward weighted shifts.  	
	It turns out that many features of the Kakutani shift \eqref{eq-Kakutani} are typical of irreducible weighted shifts which belong to
	$\overline{CSO}$.  For instance, consider the following theorem.
	
	\begin{Theorem}\label{TheoremSubsequence}
		If $T \in \overline{CSO}$ is an irreducible weighted shift with weights $\{ \alpha_n \}_{n=1}^{\infty}$, then
		\begin{enumerate}\addtolength{\itemsep}{0.5\baselineskip}
			\item there exists a subsequence of $\{ \alpha_n \}_{n=1}^{\infty}$ which tends to zero,
			\item there exists a subsequence of $\{ \alpha_n \}_{n=1}^{\infty}$ which tends to $\alpha_+ = \sup \{\alpha_n\}_{n=1}^{\infty}$.
		\end{enumerate}
		\smallskip
		In particular, $0$ and $\alpha_+$ belong to the essential spectrum $\sigma_{\text{e}}(|T|)$ of $|T| = \sqrt{T^*T}$. 
	\end{Theorem}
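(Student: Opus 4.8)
The plan is to treat (i) and (ii) separately; the closing sentence is then automatic, since $T^*Te_n=\alpha_n^2e_n$ forces $|T|=\diag(\alpha_1,\alpha_2,\dots)$ relative to $\{e_n\}$, so any subsequence of $\{\alpha_n\}$ converging to a value $\lambda$ exhibits $\lambda$ as a limit point of $\{\alpha_n\}$ or as a weight attained infinitely often, hence $\lambda\in\sigma_{\text{e}}(|T|)$; applying this with $\lambda=0$ and $\lambda=\alpha_+$ gives the ``in particular'' clause.

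Part (i) is short. By Lemma~\ref{LemmaConjugations} there are conjugations $C_n$ with $C_nT^*C_n\to T$. Since $T^*e_1=0$, each $C_nT^*C_n$ kills the unit vector $C_ne_1$, so it is not bounded below; as the bounded-below operators form a norm-open subset of $\B(\h)$, its norm-limit $T$ is not bounded below. But $\norm{Tx}^2=\norm{\,|T|x\,}^2=\sum_n\alpha_n^2|\inner{x,e_n}|^2\ge(\inf_n\alpha_n)^2\norm{x}^2$, so $\inf_n\alpha_n=0$; since $T$ is irreducible every $\alpha_n$ is positive, the infimum is not attained, and therefore a subsequence of $\{\alpha_n\}$ tends to $0$.

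For (ii), assume toward a contradiction that no subsequence of $\{\alpha_n\}$ tends to $\alpha_+=\norm{T}$. Then $\alpha_n\le\alpha_+-\epsilon_0$ for all but finitely many $n$, so $F:=\{n:\alpha_n=\alpha_+\}$ is finite and nonempty; thus $\alpha_+^2$ is an isolated eigenvalue both of $T^*T$ and of $TT^*$, with spectral projections $P$, $Q$ equal to the orthogonal projections onto $V:=\Span\{e_m:m\in F\}$ and $W:=\Span\{e_{m+1}:m\in F\}$ respectively, each of rank $d:=|F|$. Fix conjugations $C_n$ and complex symmetric operators $T_n=C_nT_n^*C_n$ with $T_n\to T$; then $C_nT^*C_n\to T$, while $T_n^*T_n\to T^*T$ and $T_nT_n^*\to TT^*$ in norm and $T_n^*T_n=C_n(T_nT_n^*)C_n$ (as $T_n^*=C_nT_nC_n$). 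Because the Borel functional calculus of a self-adjoint operator is intertwined by a conjugation, and the Riesz projection attached to an isolated finite part of the spectrum is norm-continuous under norm perturbations, the spectral projections $P_n$, $Q_n$ of $T_n^*T_n$, $T_nT_n^*$ near $\alpha_+^2$ satisfy $P_n=C_nQ_nC_n$, $P_n\to P$, $Q_n\to Q$, whence $C_nPC_n\to Q$. A straightforward induction on $k$, using $C_n(T^*)^{k+1}PC_n=(C_nT^*C_n)\bigl(C_n(T^*)^kPC_n\bigr)$ together with $C_nT^*C_n\to T$, then gives $C_n(T^*)^kPC_n\to T^kQ$ for every $k\ge0$. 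Apply this with $k=m_0:=\min F$. Irreducibility makes $T^{m_0}$ send the basis vectors spanning $W$ to nonzero scalar multiples of distinct basis vectors, so $\rank(T^{m_0}Q)=d$; but $(T^*)^{m_0}e_{m_0}=0$ while $(T^*)^{m_0}e_m$ is a nonzero multiple of $e_{m-m_0}$ for $m\in F\setminus\{m_0\}$, so $\rank((T^*)^{m_0}P)=d-1$. Since a conjugation preserves the rank of an operator, $\rank\bigl(C_n(T^*)^{m_0}PC_n\bigr)=d-1$ for all $n$, and lower semicontinuity of rank under norm convergence yields $d=\rank(T^{m_0}Q)\le d-1$ --- a contradiction. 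Hence some subsequence of $\{\alpha_n\}$ tends to $\alpha_+$.

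The one delicate point is the step $C_nPC_n\to Q$: it requires checking that for a norm-convergent sequence of positive operators the Riesz projection associated with an isolated eigenvalue of finite multiplicity of the limit is eventually defined, of constant rank, and norm-convergent, and that here it is conjugated by $C_n$ through the identity $T_n^*T_n=C_n(T_nT_n^*)C_n$. Everything afterward --- the ``power trick'' and the rank count --- is routine; irreducibility is used exactly once, namely to force the rank drop for $(T^*)^{m_0}$, since with all weights strictly positive the only way rank can be lost is the annihilation of $e_{m_0}$, the ``lowest'' vector in the top eigenspace.
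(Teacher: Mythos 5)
Your proposal is correct, and the two halves relate to the paper's proof differently. For part (i) you are doing essentially what the paper does: the paper first shows $0\in\sigma_{\text{ap}}(T^*)\Rightarrow 0\in\sigma_{\text{ap}}(T)$ via Theorem \ref{TheoremApproximatePointSpectrum}, whose proof is exactly your inline estimate $\norm{(T^*-\overline{\lambda}I)C_nx_n}\le\norm{(C_nT^*C_n-T)x_n}+\norm{(T-\lambda I)x_n}$ specialized to $\lambda=0$; your ``bounded-below operators are norm-open'' phrasing is just a repackaging. For part (ii) your route is genuinely different. The paper fixes the index $\ell$ with $\alpha_\ell=\alpha_+$, shows that the tail component $y_n$ of $C_ne_\ell$ past the cutoff $N$ must tend to $0$ (because $\norm{C_nT^*C_ne_\ell}\to\norm{Te_\ell}=\alpha_+$ forces the mass of $C_ne_\ell$ into the finitely many coordinates where weights can be large), extracts a convergent subsequence of $C_ne_\ell$ by finite-dimensional compactness, and then invokes the shift-cyclic-vector Lemma \ref{LemmaShiftTechnical} to conclude $T\in CSO$, contradicting Lemma \ref{LemmaIrreducible}. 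You instead localize the kernel-dimension obstruction of \cite[Prop.~1]{CSOA} at the top of the spectrum: the spectral projections $P$, $Q$ of $T^*T$ and $TT^*$ for the isolated eigenvalue $\alpha_+^2$ are approximately intertwined by the $C_n$ (via $T_n^*T_n=C_nT_nT_n^*C_n$ and norm-continuity of Riesz projections), powers of $T$ propagate the intertwining, and the rank count $d\le d-1$ gives a direct numerical contradiction. Your approach buys independence from Lemma \ref{LemmaShiftTechnical} (the paper's most technical lemma) and exhibits a quantitative ``approximate index'' obstruction that could plausibly be reused elsewhere; the cost is the perturbation theory of spectral projections, which you correctly flag as the one point needing care (it is standard for self-adjoint operators under norm convergence, and the identity $f(C_nAC_n)=C_nf(A)C_n$ for real-valued $f$ does hold). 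The paper's argument, by contrast, reuses machinery it needs anyway and stays entirely at the level of vectors. Both proofs of the ``in particular'' clause are the same observation that neither $0$ nor $\alpha_+$ can be an isolated eigenvalue of $|T|=\diag(\alpha_1,\alpha_2,\ldots)$ of finite multiplicity.
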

	
	Recalling that a weighted shift $T$ with weights $\{ \alpha_n \}_{n=1}^{\infty}$
	is compact if and only if $\alpha_n \to 0$ \cite[Cor.~4.27.5]{ConwayCOT}, we see that Theorem 
	\ref{TheoremSubsequence} asserts that there are no compact irreducible weighted shifts in 
	$\overline{CSO}$, in agreement with Theorem \ref{TheoremCompact}.

	The remainder of this section is devoted to developing the tools required to prove Theorem \ref{TheoremSubsequence}.
	In particular, we prove statements (i) and (ii) separately since they call for completely different methods.
	The final statement of Theorem \ref{TheoremSubsequence}, however, can easily be justified since statements (i)
	and (ii) imply that neither $0$ nor $\alpha_+$ is an isolated eigenvalue of $|T|$ of finite
	multiplicity \cite[Prop.~4.6]{ConwayCFA}.

	Before proving the first portion of Theorem \ref{TheoremSubsequence}, we need to introduce a few useful facts 
	about the spectral theory of operators in $CSO$ and its closure.
	Recall that a complex number $\lambda$ belongs to the \emph{approximate point spectrum}
	$\sigma_{\text{ap}}(T)$ of $T$ if and only if $T - \lambda I$ is not bounded below.  In other words, 
	$\lambda$ belongs to $\sigma_{\text{ap}}(T)$ if and only if there exists a sequence of unit vectors
	$x_n$ such that $(T - \lambda I)x_n \to 0$.  This is equivalent
	to asserting that $T$ is not left invertible in $\B(\h)$ \cite[p.~116]{ConwayCOT} 
	(see also \cite[Prop.~VII.6.4, Ex.~VII.3.4]{ConwayCFA}).  Although in general, one only has 
	\begin{equation}\label{eq-HalmosSpectrum}
		\sigma_{\text{ap}}(T) \cup \overline{\sigma_{\text{ap}}(T^*) } = \sigma(T),
	\end{equation}
	(see \cite[Pr.~73]{Halmos}) for a complex symmetric operator one obtains something significantly stronger \cite[Lem.~4.1]{JKLL}.  
	Indeed, if $T = CT^*C$ for some conjugation $C$, then observe that 
	$\norm{(T - \lambda I)x} = \norm{ (T^* - \overline{\lambda} I)Cx}$ for all $x \in \h$ whence
	\begin{equation*}
		\sigma_{\text{ap}}(T) = \overline{ \sigma_{\text{ap}}(T^*)}.
	\end{equation*}
	Putting the preceding together with \eqref{eq-HalmosSpectrum} we find that
	\begin{equation}\label{eq-SpectrumMagic}
		\sigma(T) = \sigma_{\text{ap}}(T).
	\end{equation}
	It turns out that \eqref{eq-SpectrumMagic} also holds under the weaker assumption
	that $T$ is a norm limit of complex symmetric operators.

	\begin{Theorem}\label{TheoremApproximatePointSpectrum}
		If $T \in \overline{CSO}$, then $\sigma(T) = \sigma_{\text{ap}}(T) = \overline{ \sigma_{\text{ap}}(T^*)}$.
	\end{Theorem}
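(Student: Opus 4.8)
The plan is to reduce the whole statement to the single containment $\sigma(S) \subseteq \sigma_{\text{ap}}(S)$ for $S \in \overline{CSO}$, and then to bootstrap to the full conclusion using the fact that $\overline{CSO}$ is closed under the adjoint operation. The latter is immediate: $CSO$ is closed under adjoints (if $T = CT^*C$ then $T^* = CTC$), and the adjoint map is an isometry of $\B(\h)$, hence preserves norm limits; so $T \in \overline{CSO}$ implies $T^* \in \overline{CSO}$.

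The core step is the claim that if $S \in \overline{CSO}$ then every $\lambda \in \sigma(S)$ lies in $\sigma_{\text{ap}}(S)$; equivalently, if $S - \lambda I$ is bounded below then $\lambda \notin \sigma(S)$. To prove this, suppose $S - \lambda I$ is bounded below, say $\norm{(S - \lambda I)x} \geq c\norm{x}$ for all $x \in \h$ with $c > 0$, and choose complex symmetric operators $S_n \to S$. Once $\norm{S - S_n} < c/2$, the operator $S_n - \lambda I$ is bounded below by $c/2$, so $\lambda \notin \sigma_{\text{ap}}(S_n)$; but $S_n$ is complex symmetric, so \eqref{eq-SpectrumMagic} gives $\sigma(S_n) = \sigma_{\text{ap}}(S_n)$, whence $S_n - \lambda I$ is in fact invertible with $\norm{(S_n - \lambda I)^{-1}} \leq 2/c$. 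Writing $S - \lambda I = (S_n - \lambda I)\bigl(I + (S_n - \lambda I)^{-1}(S - S_n)\bigr)$ and noting that $\norm{(S_n - \lambda I)^{-1}(S - S_n)} \leq (2/c)\norm{S - S_n} < 1$ for $n$ large, we conclude that $S - \lambda I$ is a product of invertible operators, hence invertible; thus $\lambda \notin \sigma(S)$, proving the claim.

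To finish, I would apply the claim to $T$ to get $\sigma(T) \subseteq \sigma_{\text{ap}}(T)$, and to $T^* \in \overline{CSO}$ to get $\sigma(T^*) \subseteq \sigma_{\text{ap}}(T^*)$; conjugating the latter inclusion and using $\overline{\sigma(T^*)} = \sigma(T)$ yields $\sigma(T) \subseteq \overline{\sigma_{\text{ap}}(T^*)}$. Since the reverse inclusions $\sigma_{\text{ap}}(T) \subseteq \sigma(T)$ and $\overline{\sigma_{\text{ap}}(T^*)} \subseteq \overline{\sigma(T^*)} = \sigma(T)$ hold for an arbitrary bounded operator, all three sets coincide. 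The only delicate point is the core step, and specifically the passage from ``$S_n - \lambda I$ is bounded below'' to ``$S_n - \lambda I$ is invertible'': this is exactly where the hypothesis $S \in \overline{CSO}$ enters, through the identity \eqref{eq-SpectrumMagic} for the complex symmetric approximants, since for a general bounded-below operator one cannot extract a uniform resolvent bound and the limiting argument would fail.
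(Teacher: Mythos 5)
Your proof is correct, but it takes a genuinely different route from the paper's. The paper argues directly with the conjugations: by Lemma \ref{LemmaConjugations} there are conjugations $C_n$ with $C_nT^*C_n \to T$, and if $(T-\lambda I)x_n \to 0$ for unit vectors $x_n$, then
\begin{equation*}
	\norm{(T^*-\overline{\lambda}I)C_nx_n} \leq \norm{(C_nT^*C_n - T)x_n} + \norm{(T-\lambda I)x_n} \to 0,
\end{equation*}
so the unit vectors $C_nx_n$ are approximate eigenvectors for $T^*$; this yields $\sigma_{\text{ap}}(T) = \overline{\sigma_{\text{ap}}(T^*)}$, and the theorem then follows from the general identity $\sigma_{\text{ap}}(T)\cup\overline{\sigma_{\text{ap}}(T^*)} = \sigma(T)$ of \eqref{eq-HalmosSpectrum}. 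You never touch the conjugations at the level of the limit operator: you invoke the identity \eqref{eq-SpectrumMagic} only for the approximants $S_n \in CSO$ to upgrade ``bounded below by $c/2$'' to ``invertible with $\norm{(S_n-\lambda I)^{-1}} \leq 2/c$,'' and then close with the Neumann-series factorization $S-\lambda I = (S_n-\lambda I)\bigl(I+(S_n-\lambda I)^{-1}(S-S_n)\bigr)$; the reduction of the $T^*$ statement to the $T$ statement via closure of $\overline{CSO}$ under adjoints is also sound. Your estimates check out (in particular $\norm{(S_n-\lambda I)^{-1}(S-S_n)} < 1$ holds strictly once $\norm{S-S_n}<c/2$). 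What each approach buys: yours is more modular, since it really shows that the property $\sigma = \sigma_{\text{ap}}$ is stable under norm limits within any norm-bounded class where it already holds, and it avoids needing \eqref{eq-HalmosSpectrum} for the limit operator; the paper's is shorter, stays within the conjugation formalism used throughout the article, and produces the extra structural fact that the $C_n$ intertwine approximate eigenvectors of $T$ and $T^*$, which is in the same spirit as the later arguments of Sections \ref{SectionCompact} and \ref{SectionShifts}.
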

	
	\begin{proof}
		By Lemma \ref{LemmaConjugations} there exists conjugations $C_n$ such that $C_n T^* C_n \to T$.
		If $\lambda$ belongs to $\sigma_{\text{ap}}(T)$, then there exists a sequence $x_n$ of unit vectors such that
		$(T - \lambda I)x_n \to 0$.  Thus
		\begin{align*}
			\norm{(T^*-\overline{\lambda}I) C_n x_n }
			&= \norm{ C_n T^* C_n x_n - \lambda  x_n} \\
			&\leq \norm{ (C_n T^* C_n  -T)x_n} + \norm{(T-\lambda I)x_n},
		\end{align*}
		which tends to zero.  We therefore conclude that  
		$\sigma_{\operatorname{ap}}(T) \subseteq \overline{\sigma_{\operatorname{ap}}(T^*)}$.
		The proof of the reverse containment is similar.
	\end{proof}

	The preceding theorem gives us a simple criterion for  excluding certain operators from $\overline{CSO}$.
	For instance, the unilateral shift $S$ is not a norm limit of complex symmetric operators 
	since $\sigma(S) = \overline{\D}$ but $\sigma_{\text{ap}}(S) = \dD$ \cite[Prob.~82]{Halmos}
	(of course there are more direct ways to prove this, see \cite[Ex.~2.14]{CCO}, or \cite[Cor.~7]{MUCFO}).

	With these preliminaries in hand, the proof of part (i) of Theorem \ref{TheoremSubsequence} is now quite simple.

	\begin{proof}[Pf.~of Theorem \ref{TheoremSubsequence}, (i)]
		Since $T^* e_1 = 0$ it follows that $0 \in \sigma_{\text{ap}}(T^*)$ whence $0 \in \sigma_{\text{ap}}(T)$
		by Theorem \ref{TheoremApproximatePointSpectrum}.  Thus there exists a sequence of unit vectors $x_n$
		such that $Tx_n \to 0$.  Suppose toward a contradiction that there exists some $\delta > 0$ such that
		$\alpha_n > \delta$ for all $n$.  This implies that
		\begin{equation*}
			\norm{Tx_n}^2
			= \sum_{i=1}^{\infty} \alpha_i^2 |\inner{x_n,e_i}|^2 
			\geq \delta^2 \sum_{i=1}^{\infty} |\inner{x_n,e_i}|^2 
			= \delta^2\norm{x_n}^2 = \delta^2,
		\end{equation*}
		which contradicts the fact that $Tx_n \to 0$.  
	\end{proof}

	Before proving the second part of Theorem \ref{TheoremSubsequence}, we
	require a somewhat lengthy technical lemma and the following definition.

	\begin{Definition}
		If $T\in \B(\h)$ and $x \in \h$, then we say that $x$ is \emph{shift-cyclic} for $T$ if
		\begin{equation*}
			\overline{ \Span\{x, Tx, T^2x,\ldots, T^*x, {T^*}^2x,\ldots\} } = \h.
		\end{equation*}
	\end{Definition}
	
	The motivation for the preceding definition lies in the fact if $T$ is an irreducible weighted shift, then
	each corresponding basis vector $e_n$ of $\h$ is shift-cyclic for $T$. 
	
	\begin{Lemma}\label{LemmaShiftTechnical}
		Suppose that $T \in \overline{CSO}$.  If $C_n$ is a sequence of conjugations such that
		$C_n T^* C_n \to T$, $x$ is a shift-cyclic vector for $T$, and $C_n x$ converges, then $T \in CSO$.
	\end{Lemma}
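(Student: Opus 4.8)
The plan is to show that the \emph{entire} sequence $C_n$ (no subsequence is needed) converges in the strong operator topology to a conjugation $C$, and that this $C$ satisfies $T = CT^*C$. The crucial preliminary step is to ``unwrap'' the norm convergence $C_n T^* C_n \to T$. Since each $C_n$ is an isometric involution, for every $y \in \h$ one has
\[
	\norm{(C_n T^* - T C_n)y} = \norm{(C_n T^* C_n - T)C_n y} \leq \norm{C_n T^* C_n - T}\,\norm{y},
\]
and similarly $\norm{(T^* C_n - C_n T)y} = \norm{C_n(C_n T^* C_n - T)y} \leq \norm{C_n T^* C_n - T}\,\norm{y}$. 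Hence $\norm{C_n T^* - T C_n} \to 0$ and $\norm{T^* C_n - C_n T} \to 0$. This is the one step that requires a little ingenuity; everything afterwards is bookkeeping.

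Next I would propagate the hypothesis that $C_n x$ converges along the orbit of $x$ under $T$ and $T^*$. Write $w = \lim_n C_n x$. I claim $C_n T^j x$ and $C_n (T^*)^j x$ converge for every $j \geq 0$, proving this by induction on $j$. If $C_n T^{j-1}x \to v$, then
\[
	C_n T^j x = T^* C_n (T^{j-1}x) + (C_n T - T^* C_n)(T^{j-1}x),
\]
where the first summand tends to $T^* v$ by continuity of $T^*$ and the second tends to $0$ by the estimate above; the argument for $C_n (T^*)^j x$ is identical with $T$ and $T^*$ interchanged. (One in fact obtains $C_n T^j x \to (T^*)^j w$ and $C_n (T^*)^j x \to T^j w$, but the explicit limits are not needed.)

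Because $x$ is shift-cyclic for $T$, the vectors $x, Tx, T^2 x, \ldots, T^*x, (T^*)^2 x, \ldots$ span a dense subspace of $\h$; since each $C_n$ is conjugate-linear with $\norm{C_n} = 1$, a routine $\epsilon/3$ argument upgrades the pointwise convergence just obtained to convergence of $C_n y$ for every $y \in \h$. Setting $Cy = \lim_n C_n y$ yields $C_n \to C \SOT$, so $C$ is a conjugation by Lemma \ref{LemmaConjugationsSOT}. Finally, for each $y \in \h$ the norm convergence gives $C_n T^* C_n y \to T y$, while $C_n T^* C_n y \to C T^* C y$ since $C_n y \to Cy$, $T^*$ is continuous, and $\norm{C_n z_n - Cz} \leq \norm{z_n - z} + \norm{C_n z - Cz} \to 0$ whenever $z_n \to z$. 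Comparing the two limits gives $T = C T^* C$, i.e. $T \in CSO$. The only nonroutine ingredient is the unwrapping identity in the first paragraph.
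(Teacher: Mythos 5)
Your proof is correct and follows essentially the same route as the paper: both exploit the isometric-involution property to turn $\norm{C_nT^*C_n-T}\to 0$ into the approximate intertwining $C_nT\approx T^*C_n$, use shift-cyclicity plus uniform boundedness to obtain SOT convergence of $C_n$ to a conjugation $C$ (via Lemma \ref{LemmaConjugationsSOT}), and then pass to the limit to get $T=CT^*C$. The only cosmetic difference is that you propagate convergence along the orbit by induction on the power $j$, whereas the paper bounds $\norm{C_nT^k-{T^*}^kC_n}$ all at once with a binomial expansion and works with finite linear combinations directly.
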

	
	\begin{proof}
		Let $v \in \h$ and $\epsilon > 0$.
		Without loss of generality we may assume that $\norm{T} \leq 1$, $\norm{x}=1$, and $\norm{v} = 1$.
		Since $x$ is a shift-cyclic vector for $T$, 
		there exists constants $a_0,a_1,\ldots,a_m$ and $b_0,b_1,\ldots,b_m$ such that
		\begin{equation*}
			\norm{v - \left(\sum_{k=0}^m a_k T^k x + b_k {T^*}^k x\right)} < \frac{\epsilon}{6}.
		\end{equation*}
		Since each $C_n$ is a conjugation it follows that
		\begin{equation}\label{eq-SixPart01}
			\norm{C_n v - \left(\sum_{k=0}^m \overline{a_k} C_nT^k   + \overline{b_k}C_n{T^*}^k  \right)x}  < \frac{\epsilon}{6}.
		\end{equation}
		Since $\Delta_n = T - C_nT^*C_n \to 0$ by hypothesis, there exists $N_1$ such that
		\begin{equation*}
			n \geq N_1 \quad\Rightarrow \quad \norm{ \Delta_n} < \min\left\{ 1 , \frac{\epsilon}{6M2^{m}} \right\},
		\end{equation*}
		where
		\begin{equation*}
			M = \sum_{k=0}^m (|a_k|+|b_k|).  
		\end{equation*}
		In particular, $n \geq N_1$ implies that
		\begin{align*}
			\norm{C_n T^k - {T^*}^kC_n} 
			&= \norm{T^k - C_n{T^*}^kC_n}  \\
			&= \norm{(C_nT^*C_n + \Delta_n)^k - (C_nT^*C_n)^k}  \\
			&\leq \sum_{j=1}^k \binom{k}{j}\norm{C_nT^*C_n}^{k-j} \norm{\Delta_n}^{j}  \\
			&< \norm{\Delta_n} \sum_{j=1}^k \binom{k}{j}\norm{T}^{k-j} \norm{\Delta_n}^{j-1} \\
			&< \norm{ \Delta_n} 2^k  \\
			&< \frac{ \epsilon}{6M2^{m-k}}\\
			&< \frac{\epsilon}{6M}
		\end{align*}
		holds for $0 \leq k \leq m$.  Thus for $n \geq N_1$ we have
		\begin{align}
			&\norm{\sum_{k=0}^m (\overline{a_k}C_nT^k  + \overline{b_k}C_n{T^*}^k  )x -
			\sum_{k=0}^m (\overline{a_k}{T^*}^k C_n  + \overline{b_k}T^kC_n )x} \nonumber \\
			&\qquad\leq \norm{\sum_{k=0}^m \overline{a_k}(C_nT^k - {T^*}^kC_n)x} 
				+\norm{\sum_{k=0}^m \overline{b_k}(C_n{T^*}^k - T^kC_n)x}\nonumber \\
			&\qquad\leq\sum_{k=0}^m (|a_k| + |b_k|)\norm{C_n T^k - {T^*}^kC_n} \nonumber \\
			&\qquad < M\cdot \frac{\epsilon}{6M}   \nonumber \\
			&\qquad = \frac{\epsilon}{6} . \label{eq-SixPart02}
		\end{align}
		Since $C_n x$ converges to some $y$ by assumption, there exists $N_2$ such that
		\begin{equation*}
			n \geq N_2 \quad \Rightarrow \quad \norm{C_n x - y} < \frac{ \epsilon}{6M}.
		\end{equation*}
		Therefore $n \geq N_2$ implies that
		\begin{align}
			&\norm{\sum_{k=0}^m (\overline{a_k}{T^*}^k C_n+ \overline{b_k}T^k C_n) x 
				- \sum_{k=0}^m (\overline{a_k}{T^*}^k + \overline{b_k}T^k)y}\nonumber \\
			&\qquad\leq \sum_{k=0}^m \norm{\overline{a_k}{T^*}^k + \overline{b_k}T^k} \norm{C_n x - y} \nonumber\\
			&\qquad <  M\cdot \frac{\epsilon}{6M}    \nonumber\\
			&\qquad =  \frac{\epsilon}{6}. \label{eq-SixPart03}
		\end{align}
		Putting this all together, if $n \geq N=\max\{N_1,N_2\}$ we find that
		\begin{align*}
			\norm{C_n v -\left(\sum_{k=0}^m \overline{a_k} {T^*}^k  + \overline{b_k}T^k \right)y } <  \frac{\epsilon}{2}
		\end{align*}
		by \eqref{eq-SixPart01}, \eqref{eq-SixPart02}, and \eqref{eq-SixPart03}.  In particular,
		\begin{equation*}
			n,n' \geq N \quad \Rightarrow \quad \norm{C_n v - C_{n'}v } < \epsilon
		\end{equation*}
		whence $C_n v$ is Cauchy and therefore converges.  By Lemma \ref{LemmaConjugationsSOT}, 
		there exists a conjugation $C$ such that $C_n \to C \SOT$.
	
		Now consider $T - CT^*C$.  Fix $u \in \h$ and observe that
		\begin{align*}
			\norm{(T - CT^*C)u}
			&=\norm{(CT - T^*C)u} \\
			&\leq \norm{(C - C_n)Tu} + \norm{C_n T u - T^* C_n u} + \norm{T^* (C_n-C) u} \\
			&\leq \norm{(C - C_n)Tu} + \norm{T - C_n T^* C_n} + \norm{(C_n-C) u}.
		\end{align*}
		Since $C_n \to C \SOT$, the first and third terms tend to zero.  The second term tends to zero
		by hypothesis whence $T = CT^*C$ so that $T \in CSO$.
	\end{proof}

	Now armed with Lemma \ref{LemmaShiftTechnical}, we complete the proof of Theorem \ref{TheoremSubsequence}.

	\begin{proof}[Pf.~of Theorem \ref{TheoremSubsequence}, (ii)]
		Suppose toward a contradiction that no such subsequence exists.  Letting $\alpha_+ = \sup \{\alpha_n\}_{n=1}^{\infty}$,
		it follows that there exists an index $\ell$ such that $\alpha_{\ell} = \alpha_+$.  Additionally, 
		there exist $\delta > 0$ and $N \in \N$ such that
		\begin{equation}\label{eq-Mdelta}
			n \geq N \quad \Rightarrow 0 < \alpha_n \leq \alpha_+ - \delta.
		\end{equation}
		Since $T \in \overline{CSO}$, there exist conjugations $C_n$ such that $C_n T^* C_n \to T$
		by Lemma \ref{LemmaConjugations}.  We now write 
		\begin{equation*}
			C_n e_{\ell} = \underbrace{\sum_{k=1}^N \inner{C_n e_{\ell},e_k}e_k}_{x_n} 
			+ \underbrace{ \sum_{k=N+1}^{\infty} \inner{C_n e_{\ell},e_k} e_k }_{y_n}
		\end{equation*}
		and observe that $\norm{x_n}^2+ \norm{y_n}^2= 1$ whence
		\begin{align*}
			\norm{C_nT^*C_ne_\ell}^2 
			&= \norm{T^*(x_n+y_n)}^2 \\
			&\leq \alpha_+^2 \norm{x_n}^2  + (\alpha_+ - \delta)^2 \norm{y_n}^2\\
			&\leq \alpha_+^2 \norm{x_n}^2  + (\alpha_+^2 -  2\alpha_+ \delta + \delta^2) \norm{y_n}^2\\
			&= \alpha_+^2  - \delta (2\alpha_+ -\delta) \norm{y_n}^2\\
			&\leq \alpha_+^2   - \delta \alpha_+ \norm{y_n}^2
		\end{align*}
		since $\delta < \alpha_+$ by \eqref{eq-Mdelta}.
		In particular, the preceding tells us that $\norm{C_nT^*C_ne_\ell} \leq \alpha_+$ from which it follows that
		\begin{align*}
			0
			&\leq \delta \alpha_+ \norm{y_n}^2\\
			&\leq \alpha_+^2 - \norm{C_nT^*C_ne_\ell}^2 \\
			&= \norm{T e_{\ell}}^2 - \norm{C_nT^*C_ne_\ell}^2 \\
			&= (\norm{T e_{\ell}} + \norm{C_nT^*C_ne_\ell})(\norm{T e_{\ell}} - \norm{C_nT^*C_ne_\ell}) \\
			&\leq 2\alpha_+(\norm{T e_{\ell}} - \norm{C_nT^*C_ne_\ell}) \\
			&\leq 2\alpha_+\norm{(T  - C_nT^*C_n)e_\ell}.
		\end{align*}
		Since the preceding tends to zero, we conclude that $y_n \to 0$.
	
		Now observe that the vectors $x_n$ belong to unit ball of the finite-dimensional
		space $\h_N = \Span\{e_i\}_{i=1}^N$.  Thus there exists a subsequence $x_{n_k}$ of the $x_n$
		which converges to some $x \in \h_N$.  Therefore
		\begin{equation*}
			\norm{C_{n_k}e_{\ell} - x} = \norm{x_{n_k} + y_{n_k} - x} \leq \norm{x_{n_k} - x} + \norm{y_{n_k}} \to 0,
		\end{equation*}
		whence $C_{n_k}e_{\ell} \to x$.  Since $e_{\ell}$ is a shift-cyclic vector for $T$ and 
		$C_{n_k}T^*C_{n_k} \to T$, we conclude from Lemma \ref{LemmaShiftTechnical} that $T\in CSO$.
		However, this contradicts Lemma \ref{LemmaIrreducible}.
	\end{proof}

\section{Approximately Kakutani shifts}\label{SectionApproximate}

	As we saw in Section \ref{SectionShifts},
	the Kakutani shift \eqref{eq-Kakutani} demonstrates behavior which is  typical of irreducible
	weighted shifts in $\overline{CSO}$.  While Theorem \ref{TheoremSubsequence} addresses some of the large-scale
	structure of the weight sequence $\{ \alpha_n \}_{n=1}^{\infty}$, it sheds little light on the small-scale behavior of the weights.  
	For instance, the Kakutani shift possesses a remarkable self-similarity in the sense that certain palindromic sequences
	are repeated infinitely often in its weight sequence.  Remarkably, it turns out that 
	an irreducible weighted shift which demonstrates some approximate level of self-similarity
	must belong to $\overline{CSO}$.
	
	\begin{Theorem}\label{TheoremApproximate}
		If $T$ is an irreducible weighted shift with weights $\{ \alpha_n\}_{n=1}^{\infty}$ such that 
		for each $n \in \N$ and $\epsilon > 0$ there exists an index $c_{n,\epsilon}\geq n$ such that
		\begin{equation}\label{eq-Trapped}
			0<\alpha_{c_{n,\epsilon}} < \epsilon,
		\end{equation}
		and 
		\begin{equation}\label{eq-AddingIndices}
			1 \leq k \leq n \quad \Rightarrow \quad |\alpha_k - \alpha_{c_{n,\epsilon}-k}| < \epsilon,
		\end{equation}
		then $T \in \overline{CSO}$.
	\end{Theorem}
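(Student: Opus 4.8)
The plan is to work directly with the definition of $\overline{CSO}$: I would show that for every $\epsilon>0$ there is a complex symmetric weighted shift $T'$ with $\|T-T'\|<\epsilon$. The approximants will all be direct sums of finite palindromic shift matrices of the type \eqref{eq-Palindrome}; as already noted in the excerpt, such an operator $T' = \oplus_i T_i'$ satisfies $T' = C(T')^*C$ for the conjugation $C = \oplus_i C_i$ where each $C_i$ reverses coordinates in the $i$-th summand, so every such $T'$ lies in $CSO$. Thus $T \in \overline{CSO}$ will follow once such $T'$ are produced.

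First I would reduce the theorem to a purely combinatorial statement about the weights. It suffices to find, for each $\epsilon>0$, a partition of $\{1,2,3,\dots\}$ into consecutive finite blocks $(p_{i-1},p_i]$, with $0=p_0<p_1<p_2<\cdots$, such that (a) $\alpha_{p_i}<\epsilon$ for every $i\ge 1$, and (b) each block is $\epsilon$-approximately palindromic, meaning $|\alpha_{p_{i-1}+j}-\alpha_{p_i-j}|<\epsilon$ for $1\le j\le p_i-p_{i-1}-1$. Indeed, given such a partition, let $T'$ be the weighted shift whose weight at each endpoint $p_i$ is $0$ and whose weight at an interior index $q\in(p_{i-1},p_i)$ is $\tfrac12(\alpha_q+\alpha_{p_{i-1}+p_i-q})$. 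The vanishing endpoint weights make $T'$ a direct sum of the finite blocks $(p_{i-1},p_i]$, and within each block the weights are palindromic by construction, so $T'\in CSO$. Moreover $T-T'$ is again a weighted shift, each of whose weights is either $\alpha_{p_i}<\epsilon$ at an endpoint or $\tfrac12|\alpha_q-\alpha_{p_{i-1}+p_i-q}|<\epsilon/2$ in an interior; since the norm of a weighted shift equals the supremum of the absolute values of its weights, $\|T-T'\|<\epsilon$.

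The heart of the proof is then the construction of such a partition from \eqref{eq-Trapped} and \eqref{eq-AddingIndices}. I would build the cut points $p_1<p_2<\cdots$ recursively. Observe first that \eqref{eq-Trapped} forces a subsequence of the weights to tend to $0$, so weights below any threshold occur infinitely often; this is what allows the construction to continue indefinitely and cover all of $\{1,2,3,\dots\}$. To choose $p_i$ once $p_0,\dots,p_{i-1}$ are fixed, I would invoke \eqref{eq-AddingIndices} at a depth $n\ge p_{i-1}$ and a sufficiently small tolerance to obtain an index $c=c_{n,\epsilon}$ with $\alpha_c$ small and $\alpha_k\approx\alpha_{c-k}$ for all $k\le n$; by the symmetry of this relation, it also pins down the weights in the last $n$ slots before $c$. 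Taking the next block to end just before such a $c$ and combining this approximate-reflection information with the palindromic structure of the blocks already built should give that the new block is approximately palindromic.

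The delicate step — the one I expect to be the main obstacle — is controlling the \emph{interior} of a new block. Condition \eqref{eq-AddingIndices} compares only an \emph{initial} string of weights with a string ending near $c_{n,\epsilon}$, and $c_{n,\epsilon}$ is merely required to be $\ge n$; if $c_{n,\epsilon}$ is much larger than the depth $n$ at which one has control, a long central stretch of the putative block is not directly pinned down by any single instance of the hypothesis. To handle this I would apply \eqref{eq-AddingIndices} simultaneously at a range of depths, or iterate it to subdivide the uncontrolled central stretch into further approximately-palindromic pieces separated by small weights, exploiting the fact that the self-similarity is assumed at \emph{every} scale. Pushing this bookkeeping through so that every block produced is approximately palindromic along its full length is where the real work lies; once that is done, the reduction in the second paragraph finishes the argument and yields $T\in\overline{CSO}$.
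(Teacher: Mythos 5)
Your reduction in the second paragraph is sound: if the weight sequence could be partitioned into consecutive blocks, each ending at a weight $<\epsilon$ and each $\epsilon$-palindromic about its own center, then the symmetrization $\beta_q=\tfrac12(\alpha_q+\alpha_{p_{i-1}+p_i-q})$ would indeed produce a complex symmetric shift within $\epsilon$ of $T$. The gap is precisely the step you flag as the main obstacle, and it is a genuine one: the combinatorial statement you reduce to is strictly stronger than what \eqref{eq-Trapped} and \eqref{eq-AddingIndices} supply, and the proposed repair does not produce it. Every reflection the hypothesis provides is anchored at the origin --- it compares the prefix $\alpha_1,\dots,\alpha_n$ with the reversed string sitting just below $c_{n,\epsilon}$. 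Applying it at other depths $n'$ only yields further origin-anchored reflections about other points $c_{n',\epsilon}/2$; it never reflects the central stretch between positions $n$ and $c_{n,\epsilon}-n$ about that stretch's \emph{own} center, and the small weights occurring inside that stretch need not sit at the center of anything palindromic. Nothing in the hypotheses prevents the string trapped between two consecutive small weights from being, say, monotone, in which case no admissible block containing it is close to palindromic. So ``iterating to subdivide the middle'' has no mechanism to run on.

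The paper's proof avoids this by never asking any individual block to be palindromic. It cuts the sequence at a recursively chosen set of small weights $m_k$ (the recursion \eqref{eq-Odd}--\eqref{eq-Even}), producing blocks $A_k$, and the recursion is rigged so that $m_{2k+2}+m_{2k}=m_{2k+3}+m_{2k-1}=c_{3m_{2k},\delta_k}$. Consequently the block $A_{2k}$, which lies entirely inside the controlled prefix $[1,3m_{2k}]$, is within $\delta_k$ of the \emph{reversal} of the much later block $A_{2k+3}$; replacing each $A_{2k}$ by the exact reversal $A_{2k+3}'$ costs less than $\epsilon/2$ in norm and yields an operator unitarily equivalent to $\bigoplus_k(A_{2k+3}\oplus A_{2k+3}')$, which is complex symmetric because a block direct-summed with its own reversal has palindromic concatenated weight string of the form \eqref{eq-Palindrome}. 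The uncontrolled middle between a prefix and its mirror causes no trouble there, because the blocks occupying it are themselves either prefix blocks or mirrors of prefix blocks for other stages of the recursion. To salvage your argument you would need to replace ``each block is approximately palindromic'' by ``each block is approximately the reversal of some other block, with the two families of blocks exhausting everything'' --- which is exactly the pairing the paper's index juggling implements.
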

	
	Since the proof of the preceding theorem is somewhat long and involved, we defer it until the
	end of this section. We instead prefer to focus on a related conjecture and several consequences of our theorem.
	
	Let us call an irreducible weighted shift $T$ satisfying the hypotheses of Theorem \ref{TheoremApproximate}
	\emph{approximately Kakutani}.  We conjecture that this property is also necessary for an irreducible
	weighted shift to belong to $\overline{CSO}$.
	
	\begin{Conjecture}
		Every irreducible weighted shift in $\overline{CSO}$
		is approximately Kakutani.
	\end{Conjecture}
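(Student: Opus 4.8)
The plan is to work with the conjugations supplied by Lemma~\ref{LemmaConjugations}. Let $T$ be an irreducible weighted shift in $\overline{CSO}$ with weight sequence $\{\alpha_n\}_{n=1}^\infty$ and basis $\{e_n\}$, and fix conjugations $C_m$ with $C_m T^* C_m \to T$; taking adjoints (using $(CAC)^* = CA^*C$ for a conjugation $C$) gives $C_m T C_m \to T^*$ as well. The engine of the argument is a family of approximate recursions obtained by conjugating these relations. Since a conjugation is an isometric involution which is real-linear, applying $C_m$ to $\norm{C_m T^* C_m e_k - \alpha_k e_{k+1}} \to 0$ gives $\norm{T^* C_m e_k - \alpha_k C_m e_{k+1}} \to 0$ for each fixed $k$; as $\alpha_k > 0$ by irreducibility, this says $C_m e_{k+1} = \alpha_k^{-1} T^* C_m e_k + o(1)$. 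Iterating yields $\norm{{T^*}^k C_m e_1} \to \alpha_1 \alpha_2 \cdots \alpha_k$ for each fixed $k$, and applying $C_m$ to $C_m T C_m e_1 \to T^* e_1 = 0$ gives $\norm{T C_m e_1} \to 0$.

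The key step is then a concentration statement: along a subsequence there are indices $c_m \to \infty$ with $\norm{C_m e_1 - e_{c_m}} \to 0$. I claim that everything else is routine once this is in hand. First, the recursion $C_m e_{k+1} = \alpha_k^{-1} T^* C_m e_k + o(1)$ propagates the concentration: given $\norm{C_m e_k - e_{c_m+1-k}} \to 0$ one reads off $\alpha_{c_m - k}/\alpha_k \to 1$ from $\norm{C_m e_{k+1}} = 1$, and hence $\norm{C_m e_{k+1} - e_{c_m - k}} \to 0$, the error over the first $n$ steps being amplified only by the fixed constant $\norm{T}^n (\alpha_1 \cdots \alpha_n)^{-1}$. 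In particular $|\alpha_{c_m - k} - \alpha_k| \to 0$ for each fixed $k$. Next, applying $C_m$ to $\norm{C_m e_1 - e_{c_m}} \to 0$ gives $\norm{C_m e_{c_m} - e_1} \to 0$, so $\norm{T^* C_m e_{c_m}} \to \norm{T^* e_1} = 0$; since $\norm{C_m T^* C_m - T} \to 0$ forces $\norm{C_m T^* C_m e_{c_m} - \alpha_{c_m} e_{c_m + 1}} \to 0$ and $\norm{C_m T^* C_m e_{c_m}} = \norm{T^* C_m e_{c_m}}$, we conclude $\alpha_{c_m} \to 0$. Because $\alpha_{c_m} > 0$ by irreducibility, for every $n$ and $\epsilon$ all large $m$ along the subsequence satisfy $c_m \ge n$, $0 < \alpha_{c_m} < \epsilon$, and $|\alpha_k - \alpha_{c_m - k}| < \epsilon$ for $1 \le k \le n$; taking $c_{n,\epsilon} = c_m$ shows $T$ is approximately Kakutani.

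What remains is the concentration statement, and this is where I expect the genuine difficulty to lie. Encode the mass of $C_m e_1$ by the probability measure $\mu_m$ on $\N$ with $\mu_m(\{p\}) = |\inner{C_m e_1, e_p}|^2$. Writing the first-paragraph estimates in coordinates gives $\int \alpha_p^2 \, d\mu_m(p) \to 0$ and $\int \prod_{j=1}^k \alpha_{p-j}^2 \, d\mu_m(p) \to \prod_{j=1}^k \alpha_j^2$ for each fixed $k$ (with $\alpha_j := 0$ for $j \le 0$), so $\mu_m$ is pushed toward positions $p$ at which $\alpha_p$ is small while the reversed block $(\alpha_{p-1}, \alpha_{p-2}, \ldots)$ reproduces $(\alpha_1, \alpha_2, \ldots)$ in an averaged sense. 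Moreover $C_m e_1$ admits no norm-convergent subsequence, since $e_1$ is shift-cyclic for $T$ and Lemma~\ref{LemmaShiftTechnical} would then place $T$ in $CSO$, contradicting Lemma~\ref{LemmaIrreducible}; hence a fixed proportion of the mass of $\mu_m$ escapes to infinity. The obstruction is that these scalar moment identities by themselves do not obviously rule out $\mu_m$ spreading its escaping mass across many positions with cancellation, rather than concentrating near a single admissible index. To close this gap one would have to exploit the rigidity not yet used --- that each $C_m$ is a symmetric unitary conjugation, that $\{C_m e_k\}_{k \ge 1}$ is an orthonormal system whose coordinates are locked to those of $C_m e_1$ through the recursion, and that $C_m T^* C_m \to T$ is a norm (not merely entrywise) limit --- and convert it into the concentration statement above, or into a weaker form still strong enough to produce one good index $c_{n,\epsilon}$ for each $n$ and $\epsilon$. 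Carrying this out is precisely the content of the conjecture.
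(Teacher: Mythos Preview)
The statement you are attempting to prove is labeled \emph{Conjecture} in the paper, and the paper offers no proof of it; it is posed as an open problem immediately following Theorem~\ref{TheoremApproximate}. There is therefore no ``paper's own proof'' to compare your attempt against.

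Your proposal is not a proof either, and you say so yourself: everything hinges on the concentration statement that along a subsequence $\norm{C_m e_1 - e_{c_m}} \to 0$ for some indices $c_m \to \infty$, and in your final paragraph you explicitly acknowledge that you do not know how to establish this, concluding that ``carrying this out is precisely the content of the conjecture.'' So what you have written is a reduction and a research outline, not a proof. The preliminary manipulations in your first two paragraphs --- the adjoint identity $(CAC)^* = CA^*C$, the recursion $C_m e_{k+1} = \alpha_k^{-1} T^* C_m e_k + o(1)$, the propagation of concentration to $|\alpha_k - \alpha_{c_m - k}| \to 0$, and the deduction $\alpha_{c_m} \to 0$ --- all appear to be correct for each fixed $k$ as $m \to \infty$, so the implication ``concentration $\Rightarrow$ approximately Kakutani'' is sound.

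The genuine gap is exactly where you place it. I would add one caution: the concentration statement may be strictly stronger than the approximately Kakutani property. The latter is a purely combinatorial condition on the weight sequence, while concentration is a statement about the particular conjugations $C_m$ furnished by Lemma~\ref{LemmaConjugations}. It is conceivable that an irreducible shift in $\overline{CSO}$ is approximately Kakutani and yet for \emph{no} approximating sequence $C_m$ does $C_m e_1$ concentrate at a single basis vector; the mass of $\mu_m$ might always spread over several admissible indices. In that case your reduction would be to a false statement. You gesture at this possibility (``or into a weaker form still strong enough to produce one good index''), and that weaker form --- extracting a single good index $c_{n,\epsilon}$ from the moment conditions and orthonormality without full concentration --- is likely the real target. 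As it stands, the conjecture remains open.
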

	
	Among other things, the following corollary asserts that an irreducible weighted shift whose
	weight sequence is a suitable perturbation of the Kakutani sequence \eqref{eq-Kakutani} 
	also belongs to $\overline{CSO}$.  In particular, this permits us to construct weighted shifts
	in $\overline{CSO} \backslash CSO$ whose moduli have desired spectral properties.

	\begin{Corollary}\label{CorollaryKakutani}
		If $T$ is an irreducible weighted shift with weights $\{ \alpha_n\}_{n=1}^{\infty}$ such that 
		\begin{enumerate}\addtolength{\itemsep}{0.5\baselineskip}
			\item $\lim_{n\to\infty} \alpha_{2^n} = 0$,
			\item $\lim_{n\to\infty} \sup\{ | \alpha_k - \alpha_{2^n-k}|: 1 \leq k \leq 2^n\} = 0$, 
		\end{enumerate}
		\smallskip
		then $T$ belongs to $\overline{CSO}$.
	\end{Corollary}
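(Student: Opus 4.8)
The plan is to deduce this directly from Theorem~\ref{TheoremApproximate} by taking the witnessing indices $c_{n,\epsilon}$ to be powers of two. Fix $n \in \N$ and $\epsilon > 0$. First I would use hypothesis (i) to choose $m_1$ so large that $\alpha_{2^m} < \epsilon$ for every $m \geq m_1$; since $T$ is irreducible, all weights are positive, so $0 < \alpha_{2^m} < \epsilon$ for such $m$, which is precisely \eqref{eq-Trapped} with the index $2^m$. Next, hypothesis (ii) supplies $m_2$ such that
\begin{equation*}
	\sup\{ |\alpha_k - \alpha_{2^m - k}| : 1 \leq k \leq 2^m \} < \epsilon \qquad \text{for all } m \geq m_2 .
\end{equation*}
In particular, for every such $m$ we have $|\alpha_k - \alpha_{2^m-k}| < \epsilon$ for all $k$ with $1 \leq k \leq 2^m$, and this range certainly contains all $k$ with $1 \leq k \leq n$ once $2^m > n$.

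Now I would set $m = \max\{m_1,\, m_2,\, \lfloor \log_2 n \rfloor + 1\}$ and declare $c_{n,\epsilon} = 2^m$. By construction $c_{n,\epsilon} = 2^m > n \geq 1$, so the index is admissible (and $c_{n,\epsilon} - k \geq 2^m - n \geq 1$ for $1 \leq k \leq n$, so the weight $\alpha_{c_{n,\epsilon}-k}$ is meaningful); condition \eqref{eq-Trapped} holds because $m \geq m_1$; and condition \eqref{eq-AddingIndices} holds because $m \geq m_2$ and $2^m > n$. Since $n$ and $\epsilon$ were arbitrary, the hypotheses of Theorem~\ref{TheoremApproximate} are met, and we conclude that $T \in \overline{CSO}$.

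There is essentially no obstacle here; the only point that merits a moment's attention is that hypothesis (ii) of the corollary is a statement about all $k$ up to $2^m$, which is strictly stronger than the statement about $k \leq n$ required by Theorem~\ref{TheoremApproximate}, so a single sufficiently large $m$ can be made to handle the size constraint $c_{n,\epsilon} \geq n$, the trapping condition, and the approximate-palindrome condition simultaneously. It is perhaps worth remarking that the Kakutani weight sequence \eqref{eq-Kakutani} itself satisfies (i) and (ii)---indeed the supremum in (ii) is eventually $0$---so the corollary recovers, and substantially generalizes, the fact that the Kakutani shift lies in $\overline{CSO}$.
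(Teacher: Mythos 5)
Your proof is correct and follows essentially the same route as the paper's: both reduce the corollary to Theorem \ref{TheoremApproximate} by taking $c_{n,\epsilon}$ to be a single sufficiently large power of two that simultaneously satisfies the size constraint, the trapping condition \eqref{eq-Trapped}, and the approximate-palindrome condition \eqref{eq-AddingIndices}. The only cosmetic difference is your choice of exponent $\max\{m_1,m_2,\lfloor\log_2 n\rfloor+1\}$ versus the paper's $K_1+K_2+n$, which are interchangeable.
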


	\begin{proof}
		Fix $n$ and let $\epsilon > 0$.  By (i), there exists $K_1$ such that 
		$0 < \alpha_{2^k} < \epsilon$ holds whenever $k \geq K_1$.  
		Letting 
		\begin{equation*}
			A_n= \sup\{ | \alpha_k - \alpha_{2^n-k}|: 1 \leq k \leq 2^n\},
		\end{equation*}
		we obtain from (ii) that there is a $K_2$ such that $k \geq K_2$ implies that 
		$0 \leq A_k < \epsilon$.  Now let 
		\begin{equation*}
			c_{n,\epsilon} = 2^{K_1+K_2+n}.
		\end{equation*}
		Since $K_1+K_2+n > K_1$ we have $0 < \alpha_{c_{n,\epsilon}} < \epsilon$,
		which is condition \eqref{eq-Trapped} of Theorem \ref{TheoremApproximate}.
		Moreover, since $K_1+K_2+n > K_2$, we also have 
		\begin{equation}\label{eq-Finally}
			|\alpha_k - \alpha_{c_{n,\epsilon}-k}|<\epsilon
		\end{equation}
		for $k < c_{n,\epsilon}$, which is condition \eqref{eq-AddingIndices} from Theorem \ref{TheoremApproximate}.
		Finally, since $c_{n,\epsilon} > n$ we see that \eqref{eq-Finally} holds whenever $1\leq k \leq n$.
		By Theorem \ref{TheoremApproximate}, we conclude that $T$ belongs to $\overline{CSO}$.
	\end{proof}

	\begin{Example}\label{ExampleDistinct}
		Consider the weight sequence $\{\alpha_n\}_{n=1}^{\infty}$ whose first few terms are
		\begin{align*}
			\alpha_1 &= 1, & \alpha_9 &= 1 + \tfrac{1}{3^7} + \tfrac{1}{3^9}, \\
			\alpha_2 &= \tfrac{1}{2}, & \alpha_{10} &= \tfrac{1}{2} + \tfrac{1}{3^6} + \tfrac{1}{3^{10}}, \\
			\alpha_3 &= 1 + \tfrac{1}{3^3}, & \alpha_{11} &= 1 + \tfrac{1}{3^3} + \tfrac{1}{3^5} + \tfrac{1}{3^{11}}, \\
			\alpha_4 &= \tfrac{1}{4}, & \alpha_{12} &= \tfrac{1}{4} + \tfrac{1}{3^{12}}, \\
			\alpha_5 &= 1 + \tfrac{1}{3^3} + \tfrac{1}{3^5}, & \alpha_{13} &= 1 + \tfrac{1}{3^3} + \tfrac{1}{3^{13}}, \\
			\alpha_6 &= \tfrac{1}{2} + \tfrac{1}{3^6}, & \alpha_{14} &= \tfrac{1}{2} + \tfrac{1}{3^{14}}, \\
			\alpha_7 &= 1 + \tfrac{1}{3^7}, & \alpha_{15} &= 1 + \tfrac{1}{3^{15}}, \\
			\alpha_8 &= \tfrac{1}{8}, & \alpha_{16} &= \tfrac{1}{16}.
		\end{align*}
		In other words, the weights are defined inductively according to the following rules.
		Let $\alpha_{2^n} = \frac{1}{2^n}$ and, having previously defined $\alpha_1,\alpha_2,\ldots,\alpha_{2^n-1}$, set
		\begin{equation*}
			\alpha_{2^n+j} = \alpha_{2^n - j} + \frac{1}{3^{2^n+j}}. \tag{$1 \leq j \leq 2^n$}
		\end{equation*}
		By construction, the weight sequence $\{\alpha_n\}_{n=1}^{\infty}$ satisfies the hypotheses of Corollary
		\ref{CorollaryKakutani} and hence the corresponding weighted shift $T$ belongs to $\overline{CSO}$.
		Moreover, a simple number-theoretic argument reveals that the $\alpha_i$ are distinct whence
		$\sigma(|T|) = \{0\} \cup \{ \alpha_i\}_{i=1}^{\infty}$
		where each $\alpha_i$ which is not a power of two is an eigenvalue of multiplicity one.  The essential
		spectrum $\sigma_{\text{e}}(|T|)$ of $|T|$ is simply $\{0,1,\frac{1}{2},\frac{1}{4},\ldots\}$.
	\end{Example}
		
	Returning briefly to the subject of compact operators, we remark that 
	the preceding example demonstrates that the fact that the eigenvalues of $|T|$
	tend to zero is essential in the proof of Theorem \ref{TheoremCompact}.  We remark that this fact is used explicitly
	in equation \eqref{eq-DecreasingToZero}.  If the eigenvalues of $|T|$ are allowed to accumulate elsewhere, then
	behavior such as that exhibited in Example \ref{ExampleDistinct} is possible.	
	It is therefore difficult to conceive of a way in which the proof of Theorem 
	\ref{TheoremCompact} could be generalized to include certain classes non-compact operators.

	Having made our remarks about Theorem \ref{TheoremApproximate},
	we now proceed to its proof.

	\begin{proof}[Pf.~of Theorem \ref{TheoremApproximate}]
		Since this proof is somewhat long and intricate, let us first describe the general strategy.
		Using an iterative procedure, we first approximate the original 
		irreducible weighted shift $T$ by a certain direct sum $T'$
		of finite-dimensional matrices of the form \eqref{eq-Palindrome}.
		In general, $T'$ itself will not be a complex symmetric operator since there is
		no reason to believe that the matrices \eqref{eq-Palindrome} produced will have any palindromic
		structure.  We therefore approximate $T'$ with a complex symmetric
		weighted shift $T''$ constructed using an index juggling scheme.
		
		Our first task is to select a strictly increasing sequence $\{ m_k \}_{k=0}^{\infty}$ of indices so that the weighted
		shift $T'$ having the weight sequence $\{ \beta_i \}_{i=1}^{\infty}$ defined by
		\begin{equation}\label{eq-NewWeight}
			\beta_i = 
			\begin{cases}
				\alpha_i &\text{if $i \neq m_k$ for all $k$},\\
				0 & \text{if $i = m_k$ for some $k$},
			\end{cases}
		\end{equation}
		approximates $T$ well in the operator norm while also being itself well-approximated
		by a complex symmetric weighted shift.  
	
		Given $\epsilon > 0$, find an index $N$ such that 
		\begin{equation}\label{eq-Ne2}
			0<\alpha_N < \frac{ \epsilon }{4}.
		\end{equation}
		This is made possible by the assumption \eqref{eq-Trapped}.
		Now inductively define sequences $\{ \delta_k \}_{k=0}^{\infty}$ and $\{ m_k \}_{k=0}^{\infty}$ by setting
		\begin{equation}\label{eq-InitialSettings}
			m_{-1} = 0, \quad m_0 = m_1 = N,
		\end{equation}
		and
		\begin{align}
			m_{2k+3} &= c_{3m_{2k, \delta_k}} - m_{2k-1}, \label{eq-Odd}\\
			m_{2k+2} &= m_{2k+3} - m_{2k} + m_{2k-1}, \label{eq-Even}
		\end{align}
		and
		\begin{equation}\label{eq-dk}
			\delta_k = \frac{1}{8}\min\left\{\alpha_1,\alpha_2,\ldots,\alpha_{3m_{2k}}, \frac{\epsilon}{2^k}\right\}.
		\end{equation}
		Unfortunately, it is not clear that the sequence $\{ m_k \}_{k=1}^{\infty}$ is
		strictly increasing.  We must therefore establish the following claim.
	
		\begin{Claim}
			The sequence $m_1,m_2,m_3,\ldots$ is strictly increasing.
		\end{Claim}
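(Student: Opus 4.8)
The plan is to prove the Claim by strong induction, establishing that $m_n < m_{n+1}$ for every $n \geq 1$; it is convenient to also carry along the auxiliary assertion that $m_n > 0$ for every $n \geq 0$, since this is what makes the mutual recursion well posed (in particular it guarantees $\delta_k > 0$, using irreducibility of $T$ to know each $\alpha_j > 0$, and that $c_{3m_{2k},\delta_k}$ is meaningful).

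The first step is to repackage the recursion \eqref{eq-Odd}--\eqref{eq-Even} into a more usable form. Substituting \eqref{eq-Odd} into \eqref{eq-Even} eliminates $m_{2k-1}$ and gives, for $k \geq 0$,
\begin{equation*}
	m_{2k+2} = c_{3m_{2k},\,\delta_k} - m_{2k},
	\qquad
	m_{2k+3} = m_{2k+2} + m_{2k} - m_{2k-1}.
\end{equation*}
Since the hypothesis of Theorem~\ref{TheoremApproximate} forces $c_{n,\epsilon} \geq n$, the first identity yields $m_{2k+2} \geq 2m_{2k}$, and iterating from \eqref{eq-InitialSettings} shows $m_{2k} \geq 2^k N$; thus every even-indexed term is positive and the even subsequence is strictly increasing. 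Re-indexing the second identity (replace $k$ by $k-1$) also gives, for $k \geq 1$,
\begin{equation*}
	m_{2k+1} = m_{2k} + \bigl(m_{2k-2} - m_{2k-3}\bigr),
\end{equation*}
read with $m_{-1} = 0$ when $k = 1$.

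For the induction itself, the base cases $m_1 < m_2 < m_3$ are immediate from \eqref{eq-InitialSettings} and the identities above ($m_2 = c_{3N,\delta_0} - N \geq 2N > N = m_1$ and $m_3 = m_2 + N > m_2$). Assume now that $m_1 < m_2 < \cdots < m_n$, all positive, for some $n \geq 3$, and split on the parity of $n+1$. If $n+1 = 2k+1$ is odd, the last displayed identity shows $m_{2k+1} = m_{2k} + (m_{2k-2} - m_{2k-3})$, and $m_{2k-2} - m_{2k-3} > 0$ by the inductive hypothesis (it equals $N$ when $k = 1$), so $m_{n+1} > m_n$. If $n+1 = 2k+2$ is even, combine $m_{2k+2} \geq 2m_{2k}$ with $m_{2k+1} = m_{2k} + (m_{2k-2} - m_{2k-3}) \leq m_{2k} + m_{2k-2}$ (using $m_{2k-3} \geq 0$) and the fact that $m_{2k} > m_{2k-2}$ (which is $m_2 > m_0$ when $k = 1$, and the inductive hypothesis when $k \geq 2$); this gives $m_{n+1} = m_{2k+2} \geq 2m_{2k} > m_{2k} + m_{2k-2} \geq m_{2k+1} = m_n$, completing the induction.

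The main difficulty here is bookkeeping rather than mathematics: the recursion is \emph{staggered}, producing the pair $m_{2k+2}, m_{2k+3}$ out of $m_{2k}$ and $m_{2k-1}$, which were created at two different earlier stages, so one must verify at each step that every term appearing has already been defined and lies within the range covered by the inductive hypothesis. Isolating the cases $k = 0$ and $k = 1$ — where the initial conditions $m_{-1} = 0$ and $m_0 = m_1$ interfere with strict monotonicity at exactly those spots — is the only real wrinkle, and is what forces the auxiliary positivity claim to be tracked alongside the monotonicity claim.
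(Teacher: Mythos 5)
Your proof is correct and rests on the same two ingredients as the paper's: the doubling bound $m_{2k+2} = c_{3m_{2k},\delta_k} - m_{2k} \geq 2m_{2k}$ coming from $c_{n,\epsilon}\geq n$, and algebraic rearrangement of the recursion to compare adjacent terms. The only difference is organizational — you run a strong induction on single steps split by parity, while the paper inducts on the four-term block $0<m_{2k-1}<m_{2k}<m_{2k+1}<m_{2k+2}$ — so this is essentially the same argument.
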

		
		\begin{proof}[Pf.~of Claim]  
			We induct on $k$ in the statement
			\begin{equation}\label{eq-Induction}
				0<m_{2k-1} < m_{2k} < m_{2k+1} < m_{2k+2}.
			\end{equation}
			Let us first verify the base case $k=1$, which is the statement
			\begin{equation}\label{eq-BaseCase}
				m_1 < m_2 < m_3 < m_4.
			\end{equation}		
			First observe that 	
			\begin{equation}\label{eq-cm}
				3m_{2k} \leq c_{3m_{2k},\delta_k}
			\end{equation}
			for $k \geq 0$.  Substituting \eqref{eq-Odd} into \eqref{eq-Even} and using \eqref{eq-cm} then yields
			\begin{equation}\label{eq-DoublingBound}
				2m_{2k} \leq m_{2k+2} 
			\end{equation}
			for $k \geq 0$.	Therefore
			\begin{align*}\qquad\qquad
				m_0&=m_1 = N && \text{by \eqref{eq-InitialSettings}}\\
				&< 2m_0=3m_0-m_0 \\
				&\leq c_{3m_0,\delta_0} - m_0 = m_2 &&\text{by \eqref{eq-cm}} \\
				& = m_3-m_0 < m_3 &&\text{by \eqref{eq-Even}}\\
				&= m_2 + m_0 < 2m_2 && \text{by \eqref{eq-Even}}\\
				&\leq m_4.&& \text{by \eqref{eq-DoublingBound}}.
			\end{align*}
			This establishes the base case \eqref{eq-BaseCase}.
			
			Suppose now that \eqref{eq-Induction} holds for some $k\geq 1$.  Under this hypothesis, we wish to show that
			\begin{equation}\label{eq-DesiredInduction}
				m_{2k+1} < m_{2k+2} < m_{2k+3} < m_{2k+4}.
			\end{equation}
			First note that $m_{2k+1} < m_{2k+2}$ is already part of the induction hypothesis 
			\eqref{eq-Induction}.  The middle inequality of \eqref{eq-DesiredInduction} follows from \eqref{eq-Even} since
			\begin{equation*}
				m_{2k+2} = m_{2k+3} - (m_{2k} - m_{2k-1}) < m_{2k+3}
			\end{equation*}
			holds by the lower inequality in \eqref{eq-Induction}.  To complete the induction, we need only
			verify the upper inequality in \eqref{eq-DesiredInduction}.  This is established as follows:
			\begin{align*}\qquad\qquad
				m_{2k+3}
				&< m_{2k+3} + m_{2k-1} \\
				&= m_{2k+2} + m_{2k} && \text{by \eqref{eq-Even}}\\
				&< 2m_{2k+2} && \text{by \eqref{eq-Induction}}\\
				&\leq m_{2k+4}. && \text{by \eqref{eq-DoublingBound}}
			\end{align*}
			This completes the proof of the claim.
		\end{proof}
		
		Having constructed the desired sequence $\{ m_k \}_{k=0}^{\infty}$ of indices, we consider the weighted
		shift $T'$ whose weight sequence is defined by \eqref{eq-NewWeight}.  To prove that $T'$ is a good
		approximation to $T$ with respect to the operator norm, we must establish that each omitted weight
		$\alpha_{m_k}$ is small.  This is our next task.
		
		In light of \eqref{eq-Ne2} and \eqref{eq-InitialSettings} we have
		\begin{equation}\label{eq-Stg01}
			0< \alpha_{m_0} = \alpha_{m_1} = \alpha_N < \frac{\epsilon}{4}.
		\end{equation}
		Since $m_3 = c_{3N,\delta_0}$ it follows from \eqref{eq-Trapped} and \eqref{eq-dk} that
		\begin{equation}\label{eq-Stg02}
			0< \alpha_{m_3} < \delta_0 < \frac{\epsilon}{4}.
		\end{equation}
		By  \eqref{eq-Odd} and \eqref{eq-Even} we see that
		\begin{equation*}
			m_{2k+2} + m_{2k} = m_{2k+3} + m_{2k-1} = c_{3m_{2k}, \delta_k}
		\end{equation*}
		which yields 
		\begin{align*}
			|\alpha_{m_{2k+3}} - \alpha_{m_{2k-1}}| &< \delta_k < \frac{\epsilon}{2^{k+3}},\\
			|\alpha_{m_{2k+2}} - \alpha_{m_{2k}}| &< \delta_k  < \frac{\epsilon}{2^{k+3}},
		\end{align*}
		by \eqref{eq-AddingIndices}.
		Using the triangle inequality and summing a finite geometric series yields
		\begin{align*} 
			|\alpha_{m_{2k+2}} - \alpha_{m_0}| &< \frac{\epsilon}{4}, &&\text{for $k\geq 0$}, \\
			|\alpha_{m_{2k+3}} - \alpha_{m_1}| &< \frac{\epsilon}{4}, && \text{if $2 \nmid k$},\\
			|\alpha_{m_{2k+3}} - \alpha_{m_3}| &< \frac{\epsilon}{4}, && \text{if $2 \mid k$}.
		\end{align*}
		Since $\alpha_{m_0}, \alpha_{m_1}, \alpha_{m_3} < \frac{\epsilon}{4}$ by \eqref{eq-Stg01}
		and \eqref{eq-Stg02}, we conclude from the preceding that
		\begin{equation*}
			0 < \alpha_{m_k} < \frac{\epsilon}{2}
		\end{equation*}
		for $k \geq 0$.  This implies that $\norm{T - T'} < \frac{\epsilon}{2}$.
		
		Unfortunately, there is no reason to believe that $T'$ belongs to $CSO$.  Therefore our
		next task is to approximate $T'$ with a complex symmetric weighted shift $T''$.
		At this point, it becomes more convenient to write
		$T' = \bigoplus_{k=1}^\infty A_k$
		where
		\begin{equation*}\small
			A_1 = 
			\begin{pmatrix}
				0 &  & & & & \\
				\alpha_{1} & 0 &  & & &\\
				& \alpha_{2} & 0 & &\\
				& &  \ddots & \ddots & & \\
				& & & \alpha_{m_1-1} & 0  & 
			\end{pmatrix}
		\end{equation*}
		and
		\begin{equation*}\small
			A_k = 
			\begin{pmatrix}
				0 &  & & & & \\
				\alpha_{m_{k-1}+1} & 0 &  & & &\\
				& \alpha_{m_{k-1}+2} & 0 & &\\
				& &  \ddots & \ddots & & \\
				& & & \alpha_{m_k-1} & 0  & 
			\end{pmatrix}
		\end{equation*}
		for $k \geq 2$.  To make certain formulas work out, we let $A_0 = A_1$.  Let
		\begin{equation*}\small
			A_k'=
			\begin{pmatrix}
				0 &  & & & & \\
				\alpha_{m_k-1} \ & 0 &  & & &\\
				& \alpha_{m_k-2} & 0 & &\\
				& &  \ddots & \ddots & & \\
				& & &  \alpha_{m_{k-1}+1}& 0  & 
			\end{pmatrix}
		\end{equation*}
		denote the matrix obtained from $A_k$ by reversing the order of the weights along the first subdiagonal.
		
		Consider the relationship between the matrices $A'_{2k+3}$ and $A_{2k}$. 
		The $\ell$th subdiagonal entry of $A'_{2k+3}$ is $\alpha_{m_{2k+3} - \ell}$ while the 
		$\ell$th subdiagonal entry of $A_{2k}$ is $\alpha_{m_{2k-1} + \ell}$.  		
		Since the sum of these indices is
		\begin{equation*}
			(m_{2k+3} - \ell) + (m_{2k-1} + \ell) = m_{2k+3} + m_{2k-1} = c_{3m_{2k},\delta_k}
		\end{equation*}
		by \eqref{eq-Odd}, it follows from \eqref{eq-AddingIndices} that
		\begin{equation*} 
			|\alpha_{m_{2k+3} - \ell} - \alpha_{m_{2k-1} + \ell}| < \delta_k < \frac{\epsilon}{2}.
		\end{equation*}
		In particular, this tells us that
		\begin{equation*}
			\norm{A'_{2k+3} - A_{2k}} < \frac{\epsilon}{2}.
		\end{equation*}
		Since $A_0 = A_1$ we observe that
		\begin{equation*} 
			T' =  \bigoplus_{k=1}^\infty A_k =
			 A_0 \oplus \bigoplus_{k=2}^\infty  A_k 
			 \cong \bigoplus_{k=0}^\infty (A_{2k+3} \oplus A_{2k}) = S'.
		\end{equation*}
		Finally define $T''$ and $S''$ by
		\begin{equation*} 
			T''
			= A'_3 \oplus \left(\bigoplus_{j=2}^\infty \begin{cases} A_j & \text{if $2 \nmid j$}, \\ A'_{j+3} & \text{if $2 \mid j$}, \end{cases}\right)
			\cong \bigoplus_{k=0}^\infty \underbrace{(A_{2k+3} \oplus A'_{2k+3})}_{\in CSO}  
			= S''.
		\end{equation*}
		The operator $S''$ belongs to $CSO$ since it is a direct sum of matrices $A_{2k+3} \oplus A'_{2k+3}$ of the form
		\eqref{eq-Palindrome} whose entries on the first subdiagonal are palindromic.
		Therefore
		\begin{equation*}
			\norm{T' - T''} = \norm{S' - S''} < \frac{\epsilon}{2}
		\end{equation*}
		whence
		\begin{equation*}
			\norm{T - T''} \leq \norm{T- T'} + \norm{T' - T''} < \frac{\epsilon}{2} + \frac{\epsilon}{2} = \epsilon.
		\end{equation*}
		Thus $T$ belongs to $\overline{CSO}$, as claimed.
	\end{proof}

\section*{Acknowledgments}

We wish to thank W.R.~Wogen for his numerous comments and suggestions.
We also greatly appreciate the careful eye of the anonymous referee, who pointed out
a number of minor mistakes in the original manuscript.

\bibliography{OCCSO}

\end{document}